\documentclass[10pt]{amsart}
\usepackage[latin1]{inputenc}
\usepackage{amsmath}
\usepackage{amssymb}
\usepackage{latexsym}
\usepackage{verbatim}
\usepackage{graphics}
\usepackage{multirow}
\usepackage{amsfonts}
\usepackage{graphicx}
\usepackage{epstopdf}
\usepackage{pdflscape}
\usepackage{multicol}
\usepackage[margin=0.5in]{geometry}

\usepackage{blkarray}

\DeclareGraphicsRule{.tif}{png}{.png}{`convert #1 `basename #1 .tif`.png}
\theoremstyle{plain}
\newtheorem{thm}{Theorem}
\newtheorem{quest}{Question}

\newtheorem{lem}{Lemma}
\newtheorem{cor}{Corollary}

\newtheorem{conj}{Conjecture}
\newtheorem*{conway}{Conway's Basic Theorem for Rational Tangles}

\newcommand{\abs}[1]{\left\vert #1 \right\vert}

\def\e{\varepsilon}

\def\s{\sigma}

\def\i{\scalebox{.15}{\raisebox{-.3\height}{\includegraphics{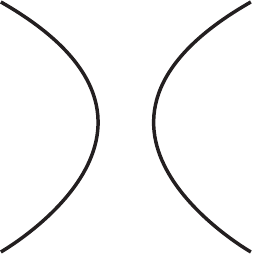}}}}
\def\z{\scalebox{.15}{\raisebox{-.3\height}{\includegraphics{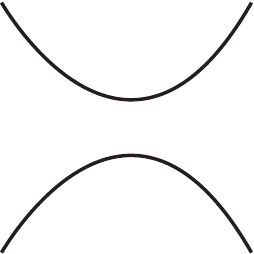}}}}
\def\v{\scalebox{.15}{\raisebox{-.3\height}{\includegraphics{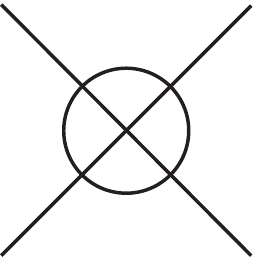}}}}

\theoremstyle{definition}

\theoremstyle{remark}

\setlength{\textwidth}{6.5in} \setlength{\oddsidemargin}{0pt}
\setlength{\evensidemargin}{0pt} \setlength{\topmargin}{0pt}
\setlength{\textheight}{8.5in}

    \setcounter{topnumber}{2}
    \setcounter{bottomnumber}{2}
    \setcounter{totalnumber}{4}     
    \setcounter{dbltopnumber}{2}    


\begin{document}

\title{Virtual Rational Tangles}

\author{Blake Mellor}
\address{Loyola Marymount University}
\email{blake.mellor@lmu.edu}

\author{Sean Nevin}
\address{Loyola Marymount University}

\thanks{This research was partially supported by the Summer Undergraduate Research Program at LMU}

\begin{abstract}
We use Kauffman's bracket polynomial to define a complex-valued invariant of virtual rational tangles that generalizes the well-known fraction invariant for classical rational tangles.  We provide a recursive formula for computing the invariant, and use it to compute several examples.
\end{abstract}

\date{\today}

\maketitle

\section{Introduction}

A {\em tangle} is a proper embedding of a collection of arcs and circles in a 3-ball; by connecting the endpoints of the embedded arcs, the tangle can be {\em closed} to create a knot or link.  Tangles play an important role in the effort to classify knots and links \cite{co}.  Of particular interest are the {\em rational tangles}, whose closures are the {\em rational knots}, also known as {\em 2-bridge knots}.  These tangles, and knots, are amongst the simplest to construct; as a result, they appear in many applications, such as the study of DNA topology \cite{es}. They also have the advantage of being completely classified by the {\em fraction} of a rational tangle \cite{co, gk, kl}.

In this paper, we wish to consider rational tangles in the context of {\em virtual knot theory} \cite{ka2}.  We will review the definition of rational tangles, and extend them to {\em virtual rational tangles}.  We will then extend the fraction of a rational tangle to virtual rational tangles (in which context it is a rational complex number), prove it is an invariant, and find a recursive formula for computing it.  We conjecture that the extended fraction invariant may classify virtual rational tangles modulo the action of {\em flypes}.

\section{Rational Tangles} \label{S:tangles}

Formally, a {\em rational tangle} is a proper embedding of two arcs $\alpha_1, \alpha_2$ in a 3-ball $B^3$ (so the endpoints of the arcs are mapped to points on the boundary of the ball), such that there exists a homeomorphism of pairs:
$$h: (B^3, \{\alpha_1, \alpha_2\}) \rightarrow (D^2 \times I, \{x,y\} \times I)$$
In other words, a rational tangle can be turned into the trivial tangle simply by twisting the endpoints of the arcs around each other on the boundary of the ball (in particular, this means the two arcs are individually unknotted), along with isotopies inside the ball. So any rational tangle can be constructed (up to isotopy) from a trivial tangle of two horizontal or vertical strands by successively twisting pairs of neighboring endpoints to create crossings. We will describe this more precisely following the notation used by Kauffman and Lambropoulou \cite{kl}.

To begin with, we call the trivial horizontal and vertical tangles the $[0]$ tangle and $[\infty]$ tangle, respectively.  An $[n]$ tangle is the result of adding $n$ horizontal half-twists to the [0] tangle, while an $\frac{1}{[n]}$ tangle adds $n$ vertical half-twists to the $[\infty]$ tangle.  These elementary tangles are illustrated in Figure \ref{F:elementary}.

\begin{figure}[htbp]
\begin{center}
$$\scalebox{.8}{\includegraphics{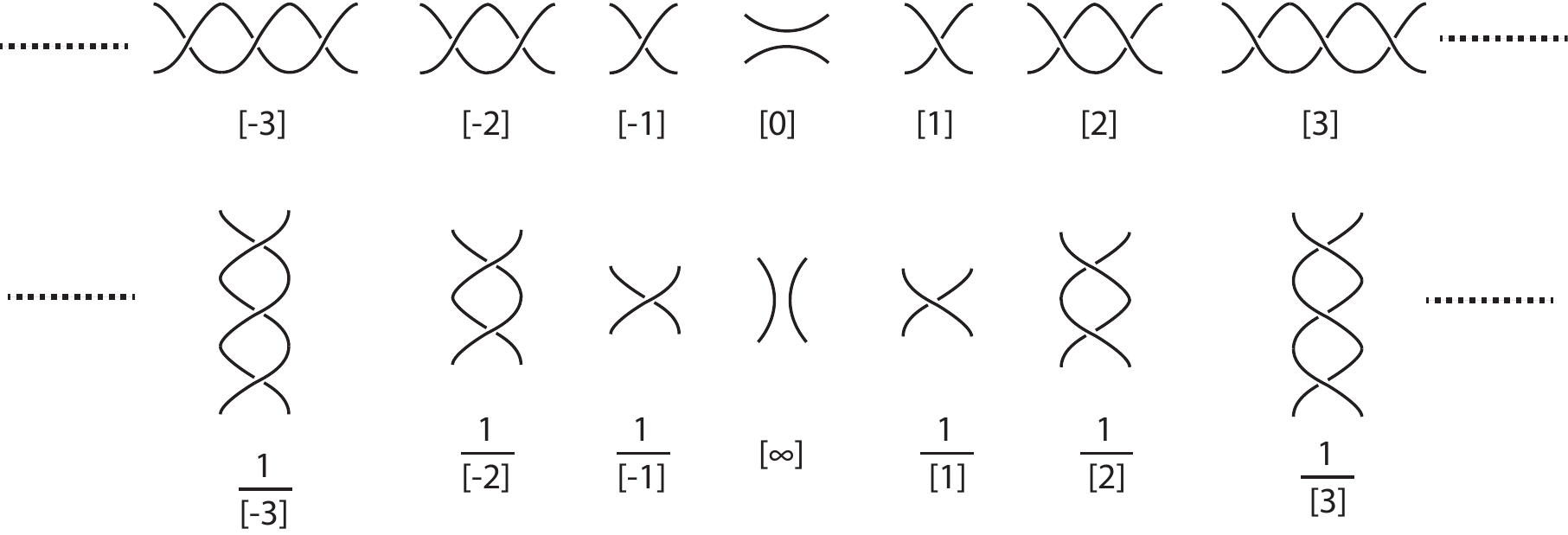}}$$
\end{center}
\caption{Elementary Tangles}
\label{F:elementary}
\end{figure}

Successively twisting the endpoints of a trivial tangle to construct a rational tangle can be broken down into a sequence of adding sets of horizontal and vertical half-twists.  To make this precise, we define two operations, + and $*$, on tangles with two strands.  These operations combine two tangles horizontally and vertically, respectively, as illustrated in Figure \ref{F:addtangles}.

\begin{figure}[htbp]
\begin{center}
$$\scalebox{1}{\includegraphics{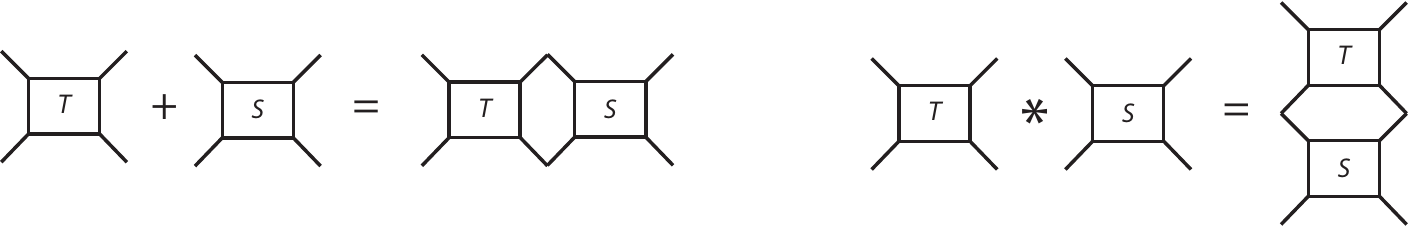}}$$
\end{center}
\caption{Adding Tangles}
\label{F:addtangles}
\end{figure} 

Goldman and Kauffman \cite{gk} show that, for any tangle $T$, $T + [n] \simeq [n] + T$ and $T * \frac{1}{[n]} \simeq \frac{1}{[n]} * T$ (where $\simeq$ denotes isotopy relative to the boundary of the ball).  Hence, up to isotopy, any rational tangle $T$ can be described inductively by setting $T_0 = [0]$ or $[\infty]$ and then alternating steps (1) and (2) below (where we start with (1) if $T_0 = [0]$ and with (2) if $T_0 = [\infty]$):
\begin{enumerate}
	\item $T_{k+1} = T_k + [n]$
	\item $T_{k+1} = T_k * \frac{1}{[n]}$
\end{enumerate}

So any rational tangle can be described by a vector $(a_1, a_2, \dots, a_n)$ (also called a {\em basic tangle}), with $a_i$ a non-zero integer, where either
$$(a_1, a_2, \dots, a_n) = \left([a_1]*\frac{1}{[a_2]}\right)+[a_3] \cdots \quad {\rm or} \quad (a_1, a_2, \dots, a_n) = \left(\frac{1}{[a_1]} + [a_2]\right)*\frac{1}{[a_3]} \cdots$$
Since, for any tangle $T$, $T + [0] \simeq T \simeq [\infty] * T$, we can unify these expressions by allowing $a_1 = \infty$ and $a_n = 0$.  Then each tangle has a unique vector $(a_1, a_2, \dots, a_n)$, with $n$ odd, where
$$(a_1, a_2, \dots, a_n) = \left(\cdots \left(\left([a_1]*\frac{1}{[a_2]}\right)+[a_3]\right)* \cdots * \frac{1}{[a_{n-1}]} \right) + [a_n]$$

The {\em fraction} of the rational tangle $(a_1, a_2, \dots, a_n)$ is the continued fraction
$$F(a_1, a_2, \dots, a_n) = [a_n, a_{n-1}, \dots, a_1] = a_n + \cfrac{1}{a_{n-1} + \cfrac{1}{\ddots\, + \cfrac{1}{a_2 + \cfrac{1}{a_1}}}}$$

The following theorem was first proved by Conway \cite{co}. Combinatorial proofs were given by Goldman and Kauffman \cite{gk} and Kauffman and Lambropoulou \cite{kl}.

\begin{conway}
Two rational tangles are isotopic if and only if they have the same fraction.
\end{conway}

\section{Kauffman bracket} \label{S:bracket}

The {\em bracket polynomial} $\langle K \rangle$ of a knot or link $K$ was introduced by Kauffman \cite{ka}.  It is defined by the following three rules, where $A$ is an indeterminate and $O$ is the unknot.  In rule (1), it is understood that the three knots are identical outside of the small region shown:

\begin{align}
\left\langle \scalebox{.15}{\raisebox{-.3\height}{\includegraphics{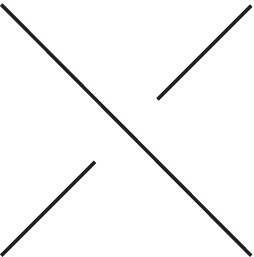}}} \right\rangle &= A\left\langle \scalebox{.15}{\raisebox{-.3\height}{\includegraphics{B2.pdf}}} \right\rangle + A^{-1}\left\langle \scalebox{.15}{\raisebox{-.3\height}{\includegraphics{B3.pdf}}} \right\rangle \\
\left\langle O \cup K \right\rangle &= (-A^2-A^{-2}) \langle K \rangle \\
\langle O \rangle &= 1
\end{align}

For a knot, the bracket polynomial is computed by applying rule (1) at every crossing, resulting in a sum of brackets of unlinks, with each  term multiplied by some power of $A$.  The polynomial for each unlink is computed using rules (2) and (3).  The bracket polynomial is an invariant of regular isotopy, which means it is invariant under Reidemeister II and Reidemeister III moves \cite{ka}.  However, it is not invariant under Reidemeister I moves.

$$\left\langle\scalebox{.3}{\raisebox{-.4\height}{\includegraphics{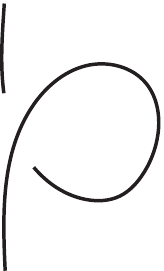}}}\right\rangle = -A^3 \left\langle \mid \right\rangle, \qquad \left\langle\scalebox{.3}{\raisebox{-.4\height}{\includegraphics{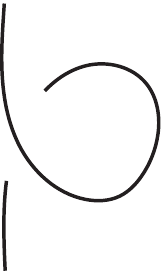}}}\right\rangle = -A^{-3} \left\langle \mid \right\rangle$$
\medskip

While the bracket polynomial was originally defined for knots, the definition works just as well for tangles.  However, after applying rule (1) to every crossing, each diagram will involve both unknotted circles and one of the two basic tangles: $\scalebox{.15}{\raisebox{-.3\height}{\includegraphics{B2.pdf}}}$ or $\scalebox{.15}{\raisebox{-.3\height}{\includegraphics{B3.pdf}}}$.  So the bracket of a tangle can be expressed as follows (now the diagrams on the right are the entire remaining tangle, not just a portion):

$$\left\langle T \right\rangle = f_T(A)\left\langle \scalebox{.15}{\raisebox{-.3\height}{\includegraphics{B3.pdf}}} \right\rangle + g_T(A)\left\langle \scalebox{.15}{\raisebox{-.3\height}{\includegraphics{B2.pdf}}} \right\rangle$$

\medskip

The coefficients $f_T(A)$ and $g_T(A)$ are invariant under Reidemeister moves II and III, but are changed by Reidemeister move I by the same factor as the overall bracket.  However, this means that the {\em ratio} of these coefficients is invariant under Reidemeister I.  We define $R_T(A) = f_T(A)/g_T(A)$; this is an isotopy invariant of tangles.

Goldman and Kauffman \cite{gk} showed that the {\em conductance invariant} $C(T) = iR_T(\sqrt{i})$ is always equal to the fraction of the tangle (thereby showing that the fraction is an isotopy invariant of tangles).  This was a key part of their proof of Conway's Basic Theorem (the other part was showing that if two tangles had the same fraction, then they were isotopic).

\section{Virtual Rational Tangles} \label{S:vrt}

Virtual knots were introduced by Kauffman \cite{ka2} as a generalization of classical knot theory (one motivation is to find a better correspondence between knots and Gauss diagrams; another is to represent knots in thickened surfaces).  Kauffman showed that virtual knots can be defined as equivalence classes of diagrams modulo certain moves, generalizing the Reidemeister moves of classical knot theory. Diagrams for virtual knots contain both classical crossings (positive and/or negative crossings, if the knot is oriented) and \emph{virtual} crossings, as shown in Figure \ref{F:crossings}.  Two diagrams are equivalent if they are related by a sequence of the Reidemeister moves shown in Figure \ref{F:reidemeister}. Note that moves (I)--(III) are the classical Reidemeister moves. Kauffman \cite{ka2} showed that classical knots are equivalent by this expanded set of Reidemeister moves if and only if they are equivalent by the classical Reidemeister moves, so classical knot theory embeds inside virtual knot theory.

\begin{figure}[htbp]
\begin{center}
\scalebox{.6}{\includegraphics{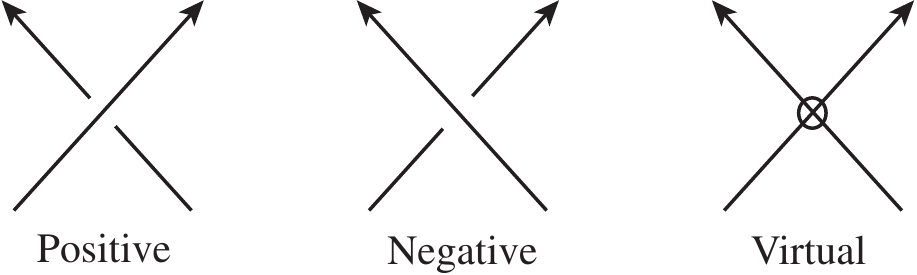}}
\end{center}
\caption{Classical and virtual crossings}
\label{F:crossings}
\end{figure}

\begin{figure}[htbp]
\begin{center}
\scalebox{.8}{\includegraphics{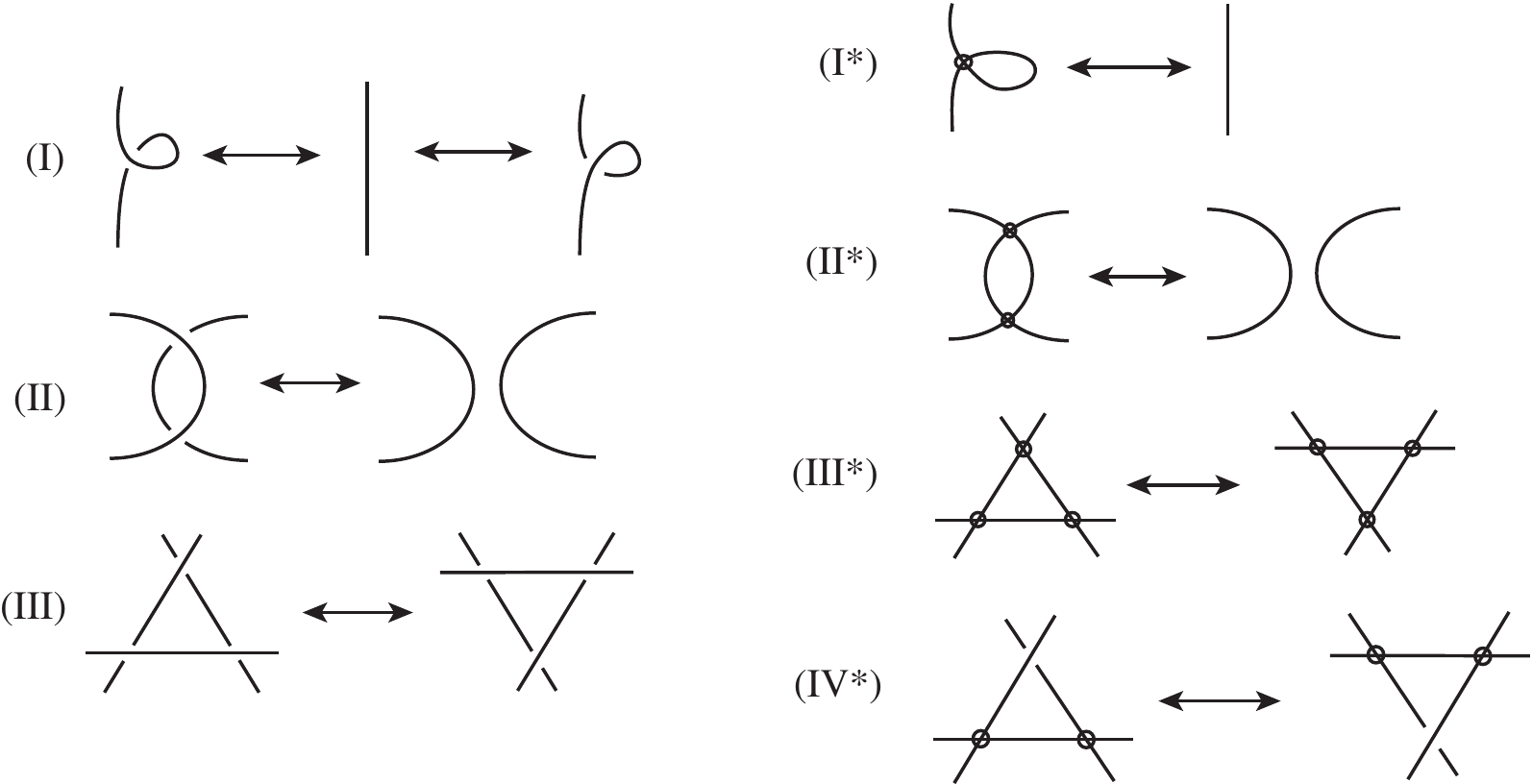}}
\end{center}
\caption{Reidemeister moves for virtual knots}
\label{F:reidemeister}
\end{figure}

As with virtual knots, we will define {\em virtual rational tangles} as equivalence classes of diagrams, modulo the Reidemeister moves shown in Figure \ref{F:reidemeister}.  In this case, the diagrams are those that can be formed (as with classical rational tangles) from the trivial tangle by successively twisting pairs of neighboring endpoints.  However, in addition to twists that produce a classical crossing (positive or negative), we also allow ``virtual twists" that produce a virtual crossing.  (Alternatively, we can simply take the diagrams formed by doing classical twists, and replacing some of the crossings with virtual crossings.)  We also allow, of course, diagrams that are equivalent to these by Reidemeister moves and by isotopies fixing the endpoints of the tangle.

\subsection{Flypes} \label{SS:flypes}

We will focus on a weaker equivalence among virtual knots than that generated by the Reidemeister moves, called {\em F-equivalence}.  Two virtual knots are {\em F-equivalent} if they are equivalent modulo planar isotopy, Reidemeister moves, and classical and virtual {\em flypes}, shown in Figure~\ref{F:flypes}.  In classical knot theory, knots that are equivalent by (classical) flypes are also isotopic, so $F$-equivalence is the same as isotopy.  However, in virtual knot theory this is no longer true. Kauffman \cite{ka2} provides an example of a non-trivial virtual knot that is $F$-equivalent to the unknot. 


\begin{figure}[htbp]
\begin{center}
\begin{align*}
{\rm Classical\ Flype:} &\qquad\scalebox{.6}{\raisebox{-.3\height}{\includegraphics{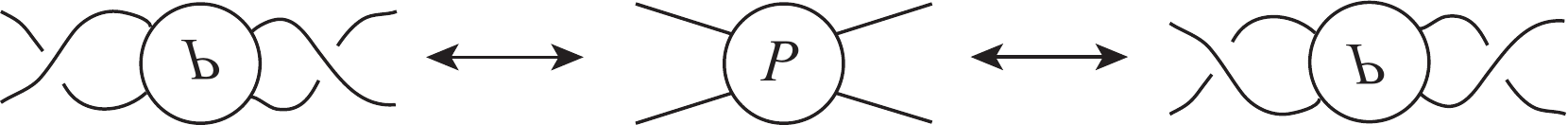}}}\\\\
{\rm Virtual\ Flype:} &\qquad\scalebox{.6}{\raisebox{-.3\height}{\includegraphics{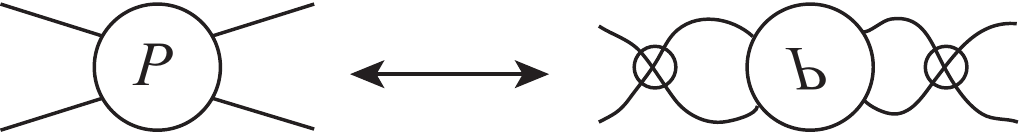}}}
\end{align*}
\end{center}
\caption{Classical and virtual flypes}
\label{F:flypes}
\end{figure}

Recall from Section \ref{S:tangles} that every rational tangle is isotopic to a {\em basic tangle} that can be described by a vector $(a_1, a_2, \dots, a_n)$.  This isotopy is through a series of flypes \cite{gk}.  Similarly, any virtual rational tangle is $F$-equivalent to a (virtual) basic tangle. Moreover, in an elementary horizontal or vertical tangle of classical and virtual crossings, the virtual crossings can be moved to the end by virtual flypes.  Since a pair of adjacent virtual crossings will cancel, any elementary tangle is $F$-equivalent to one with either no virtual crossings, or a single virtual crossing at one end.  We will now denote the elementary tangles with $n$ classical crossings by $[n^\e]$ or $\frac{1}{[n^\e]}$, where the tangle is classical if $\e = 0$ and has a single virtual crossing (in addition to $n$ classical crossings) if $\e = 1$. (Note that $[0^1]$ and $[\infty^1]$ both represent the tangle $\scalebox{.15}{\raisebox{-.3\height}{\includegraphics{VC.pdf}}}$.)

As a result, as with classical rational tangles, every virtual rational tangle is $F$-equivalent to a virtual basic tangle $(a_1^{\e_1}, a_2^{\e_2}, \dots, a_n^{\e_n})$ where 
$$(a_1^{\e_1}, a_2^{\e_2}, \dots, a_n^{\e_n}) = \left(\cdots \left(\left([a_1^{\e_1}]*\frac{1}{[a_2^{\e_2}]}\right)+[a_3^{\e_3}]\right)* \cdots * \frac{1}{[a_{n-1}^{\e_{n-1}}]} \right) + [a_n^{\e_n}].$$
Here we allow $[a_i^{\e_i}] = [0^1]$ or $\frac{1}{[a_i^{\e_i}]} = [\infty^1]$, but not $[a_i^{\e_i}] = [0^0]$ or $\frac{1}{[a_i^{\e_i}]} = [\infty^0]$, for $1 < i < n$.  We can assume that $a_n$ is a horizontal tangles if we allow $[a_n^{\e_n}] = [0^0]$ and, we can assume $[a_1^{\e_1}]$ is either a horizontal tangle or $[\infty^0]$. This permits us to use a single expression for a basic tangle, as in the classical case.

\subsection{The bracket polynomial and $F$-equivalence}

The bracket polynomial extends easily to virtual knots and virtual tangles; as before, we apply rule (1) to every classical crossing.  For classical knots and tangles, the bracket polynomial is an invariant of regular isotopy; Kauffman \cite{ka2} showed that, for virtual knots, it is also invariant under the virtual Reidemeister moves. We will extend this to show that for virtual knots and tangles the bracket polynomial is an invariant of {\em regular $F$-equivalence} (i.e. regular isotopy, virtual Reidemeister moves, and flypes).

\begin{thm} \label{T:flypes}
The bracket polynomial for virtual knots and tangles is invariant under classical and virtual flypes (and hence is an invariant of regular $F$-equivalence).
\end{thm}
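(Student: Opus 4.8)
The plan is to reduce to a single flype of each type and then compute with the tangle bracket directly. Since the bracket is already known to be invariant under Reidemeister moves II and III and under the virtual Reidemeister moves, it suffices to check that one classical flype and one virtual flype leave the bracket unchanged. Throughout I use three structural facts about the $2$-strand bracket: (i) after resolving all classical crossings, every state connects the four boundary points in one of exactly three ways, so $\langle T\rangle$ lies in the span of $\langle\i\rangle$, $\langle\z\rangle$, and (when virtual crossings occur) $\langle\v\rangle$; (ii) horizontal juxtaposition of tangles turns this span into a $3$-dimensional \emph{commutative} algebra, where a closed loop contributes the factor $-A^2-A^{-2}$ and $\langle T_1\cdot T_2\rangle=\langle T_1\rangle\cdot\langle T_2\rangle$; and (iii) by rule (1) the single crossing in the flype is $A\langle\z\rangle+A^{-1}\langle\i\rangle$ (with the analogous expression for the opposite crossing). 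In these terms the flype exchanges ``crossing, then $T$'' with ``$T^\ast$, then crossing,'' where $T^\ast$ is the $180^\circ$ flip of $T$ used in the move (see Figure~\ref{F:flypes}).

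The technical heart is a Rotation Lemma: the flip $T\mapsto T^\ast$ preserves the \emph{individual} bracket coefficients of $T$, so that $\langle T^\ast\rangle=\langle T\rangle$ as elements of the algebra above. I would prove this by tracking the flip on a diagram: a $180^\circ$ rotation about an axis lying in the plane of the page carries the diagram of $T$ to its planar reflection with every classical crossing switched. Planar reflection sends $\langle D\rangle(A)$ to $\langle D\rangle(A^{-1})$, since it interchanges the $A$- and $A^{-1}$-smoothings at each crossing while preserving the loop count; switching all crossings likewise sends $\langle D\rangle(A)$ to $\langle D\rangle(A^{-1})$; performing both returns the original polynomial. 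Because each basic tangle $\i$, $\z$, $\v$ is carried to itself by the flip, the coefficients are preserved one by one. Getting this sign bookkeeping exactly right, and confirming from Figure~\ref{F:flypes} which axis each flip uses, is the step I expect to require the most care.

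For the classical flype I would apply rule (1) to the single crossing on each side of the move, replacing it by its element $A\langle\z\rangle+A^{-1}\langle\i\rangle$ of the commutative algebra. Writing the two sides as $(\text{crossing})\cdot\langle T\rangle$ and $\langle T^\ast\rangle\cdot(\text{crossing})$, commutativity gives $(\text{crossing})\cdot\langle T\rangle=\langle T\rangle\cdot(\text{crossing})$, and the Rotation Lemma lets me replace $\langle T\rangle$ by $\langle T^\ast\rangle$. Hence the two brackets agree.

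For the virtual flype the argument is identical, except that the virtual crossing is not resolved: as a $2$-strand tangle it is already the element $\langle\v\rangle$ (the strand transposition) of the same commutative algebra, so it commutes with $\langle T\rangle$, and the same commutation-plus-rotation computation applies. One verifies directly that $\langle\v\rangle$ satisfies the expected relations (for instance $\langle\v\rangle^2=\langle\z\rangle$ up to the standard convention and $\langle\v\rangle\cdot\langle\i\rangle=\langle\i\rangle$), which both confirms commutativity and lets the coefficients be matched explicitly if a figure-level check is desired. Combining the two cases with the already-established invariance under the Reidemeister and virtual Reidemeister moves yields invariance of the bracket under regular $F$-equivalence.
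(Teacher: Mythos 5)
Your proposal is correct, and its skeleton matches the paper's: both expand the bracket of the tangle being flyped over the three basic tangles (horizontal strands, vertical strands, single virtual crossing), and both rest on the observation that the $180^\circ$ flip carries each basic tangle to itself, so the three coefficients survive the flype. In fact your Rotation Lemma is argued more completely than in the paper, which simply asserts the flip symmetry of the basic tangles; your factorization of the flip into a planar reflection (sending $A \mapsto A^{-1}$) composed with switching every classical crossing (sending $A \mapsto A^{-1}$ again) is exactly the justification that the \emph{coefficients}, and not merely the pictures, are preserved. Where you genuinely diverge is the finishing step. The paper concludes diagrammatically, applying the flype to each basic tangle and computing: two opposite Reidemeister I kinks contribute $(-A^3)(-A^{-3})=1$, the horizontal strands acquire a Reidemeister II pair, and a four-term resolution shows that a virtual crossing flanked by a crossing and its inverse reduces back to a virtual crossing. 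You instead observe that juxtaposition makes the span of the three basic tangles a commutative algebra --- with relations $e^2=\delta e$ where $\delta=-A^2-A^{-2}$, $v^2=1$, and $ev=ve=e$ --- over which the bracket is multiplicative, so the flype identity follows at once from commutativity together with your Rotation Lemma. The computational content is the same (your commutation of the crossing element $A\cdot 1 + A^{-1}e$ with $v$ encodes precisely the paper's four-term resolution), but your packaging is more structural: once the relations are verified, it disposes of every flype variant --- classical and virtual, either sign of crossing, one-sided or conjugation form --- in a single stroke, whereas the paper must remark that ``the other is similar.'' The cost is that you must separately establish multiplicativity of the state sum under juxtaposition and verify the algebra relations by virtual Reidemeister (detour) moves, facts the paper never needs to isolate; make sure those verifications, and the identification of the rotation axis used in each flype of Figure \ref{F:flypes}, are actually written out, since they are the only places where your argument still touches diagrams.
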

\begin{proof}
We want to compare the results of applying the bracket polynomials to the various tangles in Figure \ref{F:flypes}.  After we resolve all the classical crossings, we are left with a state sum where each state is a union of loops and tangles with only virtual crossings.  Under the virtual Reidemeister moves, a loop with only virtual crossings is equivalent to the unknot, and a tangle with only virtual crossings is equivalent to one of three basic tangles.  So the bracket of a tangle $P$ becomes a sum of three terms:
$$\left\langle P \right\rangle = f_P(A)\left\langle \scalebox{.15}{\raisebox{-.3\height}{\includegraphics{B3.pdf}}} \right\rangle + g_P(A)\left\langle \scalebox{.15}{\raisebox{-.3\height}{\includegraphics{B2.pdf}}} \right\rangle + h_P(A)\left\langle \scalebox{.15}{\raisebox{-.3\height}{\includegraphics{VC.pdf}}} \right\rangle$$
Since all three basic tangles are symmetric when flipped upside down, the result of applying the bracket after a virtual flype (as shown in Figure \ref{F:flypes}) is
$$f_P(A)\left\langle \scalebox{.15}{\raisebox{-.3\height}{\includegraphics{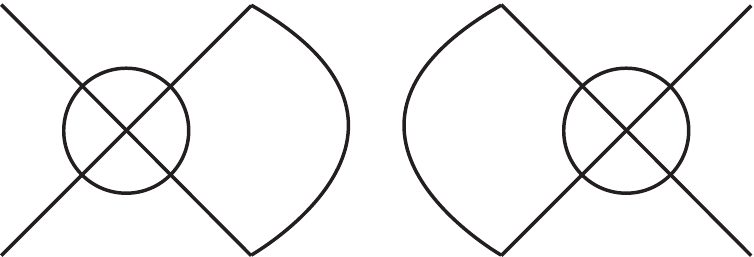}}} \right\rangle + g_P(A)\left\langle \scalebox{.15}{\raisebox{-.3\height}{\includegraphics{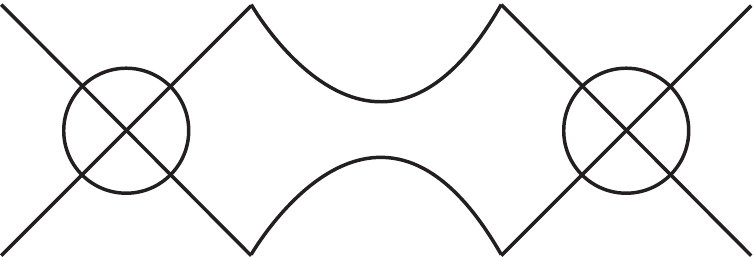}}} \right\rangle + h_P(A)\left\langle \scalebox{.15}{\raisebox{-.3\height}{\includegraphics{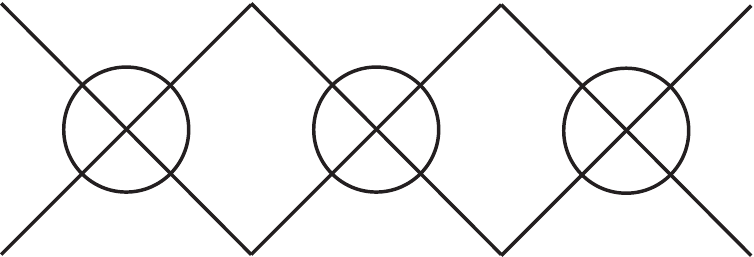}}} \right\rangle$$
By virtual Reidemeister moves I* and II*, this is equal to $\left\langle P \right\rangle$.

Now we consider a classical flype (with a possibly virtual tangle $P$).  We will consider one of the classical flypes shown in Figure \ref{F:flypes}; the other is similar.  In this case, the value of the bracket polynomial of the flyped tangle is
$$f_P(A)\left\langle \scalebox{.15}{\raisebox{-.3\height}{\includegraphics{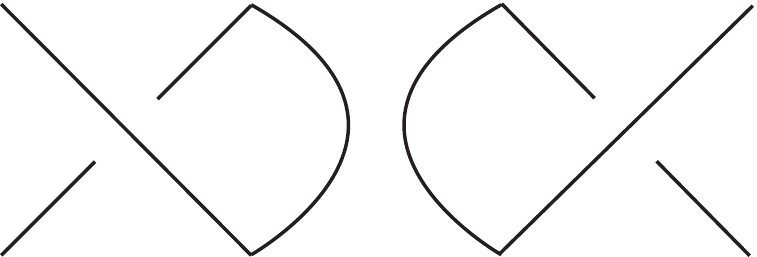}}} \right\rangle + g_P(A)\left\langle \scalebox{.15}{\raisebox{-.3\height}{\includegraphics{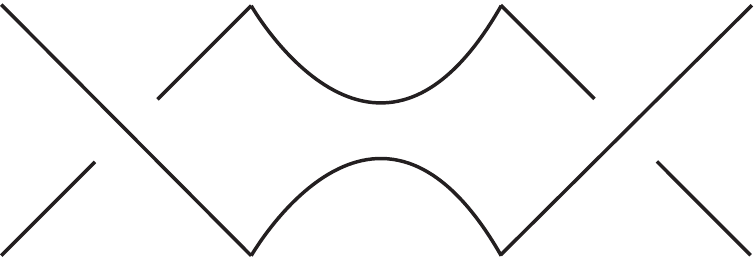}}} \right\rangle + h_P(A)\left\langle \scalebox{.15}{\raisebox{-.3\height}{\includegraphics{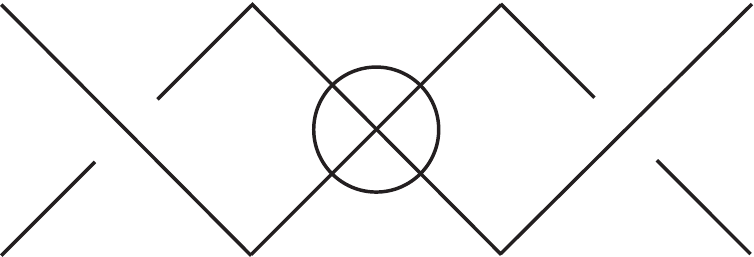}}} \right\rangle$$
We consider each of the three terms in turn:
\begin{align*}
\left\langle \scalebox{.15}{\raisebox{-.3\height}{\includegraphics{C3.pdf}}} \right\rangle &= (-A^3)(-A^{-3})\left\langle \scalebox{.15}{\raisebox{-.3\height}{\includegraphics{B3.pdf}}} \right\rangle = \left\langle \scalebox{.15}{\raisebox{-.3\height}{\includegraphics{B3.pdf}}} \right\rangle \\
\left\langle \scalebox{.15}{\raisebox{-.3\height}{\includegraphics{C2.pdf}}} \right\rangle &= \left\langle \scalebox{.15}{\raisebox{-.3\height}{\includegraphics{B2.pdf}}} \right\rangle\ {\rm by\ Reidemeister\ II}\\
\left\langle \scalebox{.15}{\raisebox{-.3\height}{\includegraphics{VCflype.pdf}}} \right\rangle &= A\left\langle \scalebox{.15}{\raisebox{-.3\height}{\includegraphics{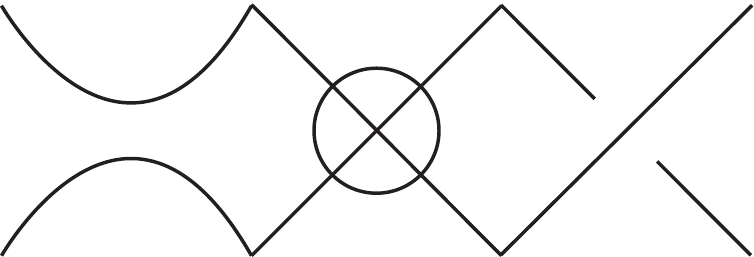}}} \right\rangle + A^{-1}\left\langle \scalebox{.15}{\raisebox{-.3\height}{\includegraphics{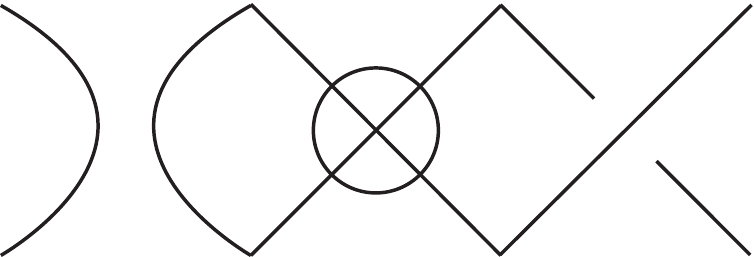}}} \right\rangle \\
&= A^2\left\langle \scalebox{.15}{\raisebox{-.3\height}{\includegraphics{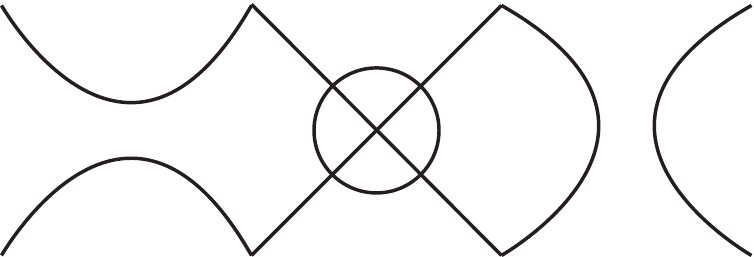}}} \right\rangle + \left\langle \scalebox{.15}{\raisebox{-.3\height}{\includegraphics{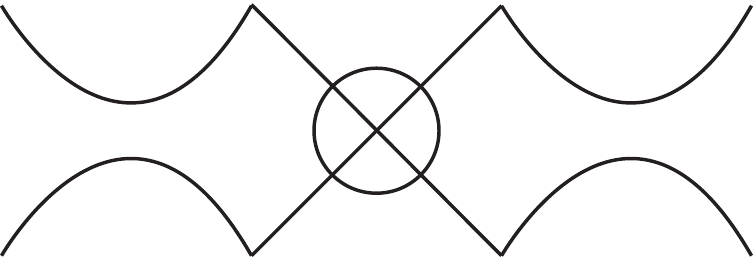}}} \right\rangle + \left\langle \scalebox{.15}{\raisebox{-.3\height}{\includegraphics{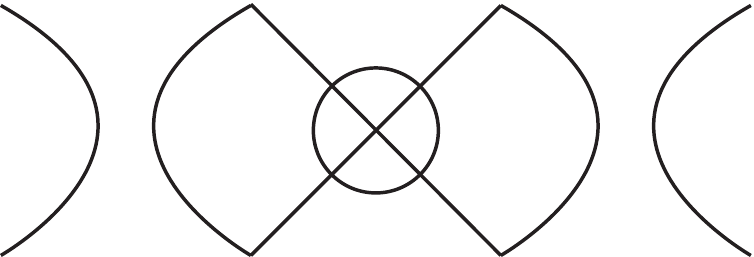}}} \right\rangle + A^{-2}\left\langle \scalebox{.15}{\raisebox{-.3\height}{\includegraphics{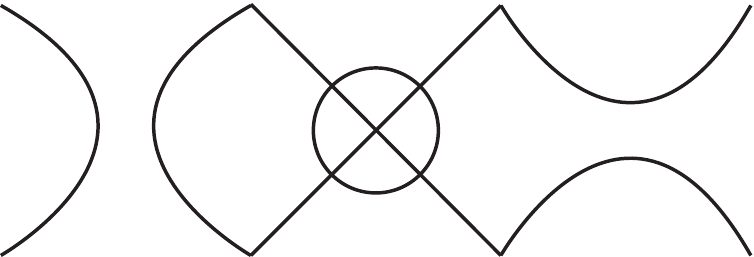}}} \right\rangle\\
&= A^2\left\langle \scalebox{.15}{\raisebox{-.3\height}{\includegraphics{B3.pdf}}} \right\rangle + \left\langle \scalebox{.15}{\raisebox{-.3\height}{\includegraphics{VC.pdf}}} \right\rangle + (-A^2-A^{-2})\left\langle \scalebox{.15}{\raisebox{-.3\height}{\includegraphics{B3.pdf}}} \right\rangle + A^{-2}\left\langle \scalebox{.15}{\raisebox{-.3\height}{\includegraphics{B3.pdf}}} \right\rangle\\
&= \left\langle \scalebox{.15}{\raisebox{-.3\height}{\includegraphics{VC.pdf}}} \right\rangle
\end{align*}
So, once again, the bracket polynomial of the flyped tangle is equal to $\left\langle P \right\rangle$.
\end{proof}

\noindent{\sc Remark:} Kauffman \cite{ka2} showed that ``virtualizing" a crossing did not change the bracket polynomial:
$$\left\langle \scalebox{.25}{\raisebox{-.3\height}{\includegraphics{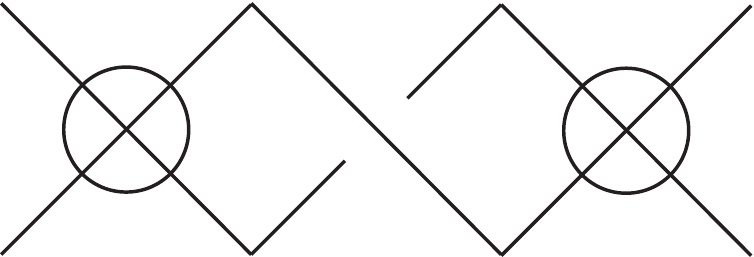}}} \right\rangle = \left\langle \scalebox{.25}{\raisebox{-.3\height}{\includegraphics{B1.pdf}}} \right\rangle$$
We can now see that this is a special case of Theorem \ref{T:flypes}, where the tangle being flyped consists of a single crossing. Kamada \cite{kam} calls this virtualization move a Kauffman flype, or {\em K-flype}, and defines two virtual knots as {\em K-equivalent} if they are equivalent modulo the extended Reidemeister moves and $K$-flypes. Clearly $K$-equivalence implies $F$-equivalence, but it is an open question whether the converse is true.

\section{Fraction of a virtual rational tangle}

\noindent As we observed in the proof of Theorem \ref{T:flypes} the bracket of a virtual tangle reduces to a sum of three terms:
$$\left\langle T \right\rangle = f_T(A)\left\langle \scalebox{.15}{\raisebox{-.3\height}{\includegraphics{B3.pdf}}} \right\rangle + g_T(A)\left\langle \scalebox{.15}{\raisebox{-.3\height}{\includegraphics{B2.pdf}}} \right\rangle + h_T(A)\left\langle \scalebox{.15}{\raisebox{-.3\height}{\includegraphics{VC.pdf}}} \right\rangle$$

\medskip

\noindent The bracket for tangles is an invariant of regular $F$-equivalence, but not an isotopy invariant, since it is not invariant under Reidemeister move I.  As in the classical case, to obtain an isotopy invariant we define a ratio; but now we want one that uses all three coefficients:
$$R_T(A) = \dfrac{f_T(A) + h_T(A)}{g_T(A) + h_T(A)}$$
As before, we define the {\em conductance} of the tangle by $C(T) = iR_T(\sqrt{i})$.  Like the bracket, $R_T$ and $C(T)$ are invariants of regular $F$-equivalence, but they are also isotopy invariants.  In the case of a classical rational tangle, $h_T(A) = 0$, and this agrees with the fraction of the tangle.  An important property of the classical conductance invariant is that it is additive \cite{gk}.  In the virtual case, this is no longer true in general, but it still holds in many important cases.

\begin{lem} \label{L:additivity}
Suppose $T$ and $S$ are tangles and that 
$$\left\langle T \right\rangle = a\left\langle \scalebox{.15}{\raisebox{-.3\height}{\includegraphics{B3.pdf}}} \right\rangle + b\left\langle \scalebox{.15}{\raisebox{-.3\height}{\includegraphics{B2.pdf}}} \right\rangle + c\left\langle \scalebox{.15}{\raisebox{-.3\height}{\includegraphics{VC.pdf}}} \right\rangle$$
$$\left\langle S \right\rangle = d\left\langle \scalebox{.15}{\raisebox{-.3\height}{\includegraphics{B3.pdf}}} \right\rangle + e\left\langle \scalebox{.15}{\raisebox{-.3\height}{\includegraphics{B2.pdf}}} \right\rangle + f\left\langle \scalebox{.15}{\raisebox{-.3\height}{\includegraphics{VC.pdf}}} \right\rangle$$
where the coefficients $a, b, c, d, e, f$ are polynomials in $A$.  If {\em either} $c = 0$ or $f = 0$ (in particular, if either $T$ or $S$ is a classical tangle), then $C(T+S) = C(T) + C(S)$ and $C(T*S) = \dfrac{1}{\frac{1}{C(T)} + \frac{1}{C(S)}}$.
\end{lem}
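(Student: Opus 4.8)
The plan is to reduce the statement to an explicit bilinear computation in the three basic tangles $\i$ (the $[\infty]$ tangle), $\z$ (the $[0]$ tangle), and $\v$ (the virtual crossing), exploiting the fact that the loop factor $\delta = -A^2-A^{-2}$ vanishes at $A=\sqrt i$. With the identifications $f_T=a$, $g_T=b$, $h_T=c$, the conductance is $C(T)=i\,\frac{a+c}{b+c}$ and $C(S)=i\,\frac{d+f}{e+f}$ (evaluated at $A=\sqrt i$), so the two claimed identities become purely algebraic relations among $a,\dots,f$ that I want to verify.

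First I would record how the three basic tangles combine under the two operations. Because the bracket of a diagram depends linearly on the bracket expansion of any sub-tangle, $\langle T+S\rangle$ is the double sum over the nine products $X+Y$ with $X$ a basic tangle from $T$ and $Y$ one from $S$ (and similarly for $*$). Each such product is, after resolving the picture, again one of the three basic tangles, with at most one free loop. Tracing strands gives the horizontal table $\z+\z=\z$, $\i+\z=\z+\i=\i$, $\v+\z=\z+\v=\v$, $\i+\v=\v+\i=\i$, $\v+\v=\z$, and the single loop-producing case $\i+\i=\delta\,\i$; the $*$-table follows by the $90^\circ$ rotation that interchanges $\i\leftrightarrow\z$, fixes $\v$, and swaps $+$ with $*$.

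Feeding these tables into the double sum and collecting terms, I would obtain $f_{T+S}=ad\delta+ae+af+bd+cd$, $g_{T+S}=be+cf$, $h_{T+S}=bf+ce$, and the analogous expressions for $T*S$ (in which $f_{T*S}+h_{T*S}=(a+c)(d+f)$ appears). Two clean facts then drive the proof: the denominators factor, $g_{T+S}+h_{T+S}=(b+c)(e+f)$ and $f_{T*S}+h_{T*S}=(a+c)(d+f)$ identically, and the only $\delta$ contribution sits in the numerator of $R_{T+S}$ (resp.\ the denominator of $R_{T*S}$), where it dies once we set $A=\sqrt i$, since then $\delta=-i-(-i)=0$. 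Comparing the resulting numerator with that of $C(T)+C(S)=i\,\frac{(a+c)(e+f)+(d+f)(b+c)}{(b+c)(e+f)}$, and likewise for the $*$ formula, I expect the two to differ by exactly $2cf$ in each case; hence both identities hold precisely when $cf=0$, i.e.\ when $c=0$ or $f=0$.

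The hard part will not be any single calculation but the bookkeeping: getting all nine combination rules (and the location of the lone $\delta$) correct, and keeping straight which basic tangle is $[0]$ and which is $[\infty]$ so that the conductance comes out with the right orientation (fraction for $+$, reciprocal-sum for $*$). Once the tables are pinned down, the vanishing of $\delta$ at $\sqrt i$ together with the factorization of the denominators reduces everything to the single symmetric obstruction $2cf$, which is exactly what the hypothesis $c=0$ or $f=0$ removes.
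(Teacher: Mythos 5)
Your proposal is correct and is essentially the paper's own proof: the paper expands $\langle T+S\rangle$ (and, in a parallel calculation, $\langle T*S\rangle$) over the same nine combinations of basic tangles, uses the vanishing of the loop factor $-A^2-A^{-2}$ at $A=\sqrt{i}$, and isolates exactly your obstruction term $2cf$, which the hypothesis $c=0$ or $f=0$ kills. Your coefficient formulas $f_{T+S}=ad\delta+ae+af+bd+cd$, $g_{T+S}=be+cf$, $h_{T+S}=bf+ce$, and the factorizations $(b+c)(e+f)$ and $(a+c)(d+f)$, all agree with the paper's computation.
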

\begin{proof}
\begin{align*}
\left\langle T+S \right\rangle &= ad\left\langle \i + \i \right\rangle + ae\left\langle \i + \z \right\rangle + af\left\langle \i + \v \right\rangle\\
&\qquad+ bd\left\langle \z + \i \right\rangle + be\left\langle \z + \z \right\rangle + bf\left\langle \z + \v \right\rangle\\
&\qquad+ cd\left\langle \v + \i \right\rangle + ce\left\langle \v + \z \right\rangle + cf\left\langle \v + \v \right\rangle\\
&= ad\left\langle O \cup \i \right\rangle + ae\left\langle \i \right\rangle + af\left\langle \i \right\rangle\\
&\qquad+ bd\left\langle \i \right\rangle + be\left\langle \z \right\rangle + bf\left\langle \v \right\rangle\\
&\qquad+ cd\left\langle \i \right\rangle + ce\left\langle \v  \right\rangle + cf\left\langle \z \right\rangle\\
&= (ad(-A^2-A^{-2}) + ae + af + bd + cd) \left\langle \i \right\rangle + (be + cf) \left\langle \z \right\rangle + (bf + ce) \left\langle \v \right\rangle
\end{align*}
If we let $A = \sqrt{i}$, then $-A^2 - A^{-2} = 0$, so 
$$C(T+S) = i \frac{ae + af + bd + cd + bf + ce}{be+cf +bf + ce} = i \frac{(a+c)(e+f) + (b+c)(d+f) - 2cf}{(b+c)(e+f)}$$
$$= i\left(\frac{a+c}{b+c} + \frac{d+f}{e+f} - \frac{2cf}{(b+c)(e+f)}\right) = C(T) + C(S) - \frac{2cfi}{(b+c)(e+f)}$$
So if either $c= 0$ or $f = 0$, then $C(T+S) = C(T) + C(S)$.  A similar calculation shows that if $c=0$ or $f=0$, then $C(T*S) = \dfrac{1}{\frac{1}{C(T)} + \frac{1}{C(S)}}$.
\end{proof}

The following lemma gives the value of $\langle T\rangle$ when $T$ is an elementary tangle.

\begin{lem} \label{L:elementary}
In the formulas below, $\s(n) = \frac{n}{\abs{n}}$ is the sign of $n$.  Recall that $[n^\e]$ is a classical elementary tangle if $\e = 0$ and has a single virtual crossing at one end if $\e = 1$.
$$\langle [n^0] \rangle = A^{n-2\s(n)}\sum_{k=0}^{\abs{n}-1}{\left(-A^{-4\s(n)}\right)^k} \left\langle \scalebox{.15}{\raisebox{-.3\height}{\includegraphics{B3.pdf}}} \right\rangle + A^n \left\langle \scalebox{.15}{\raisebox{-.3\height}{\includegraphics{B2.pdf}}} \right\rangle$$
$$\langle [n^1] \rangle = A^{n-2\s(n)}\sum_{k=0}^{\abs{n}-1}{\left(-A^{-4\s(n)}\right)^k} \left\langle \scalebox{.15}{\raisebox{-.3\height}{\includegraphics{B3.pdf}}} \right\rangle + A^n \left\langle \scalebox{.15}{\raisebox{-.3\height}{\includegraphics{VC.pdf}}} \right\rangle$$
$$\left\langle \frac{1}{[n^0]} \right\rangle =  A^{-n} \left\langle \scalebox{.15}{\raisebox{-.3\height}{\includegraphics{B3.pdf}}} \right\rangle + A^{-n+2\s(n)}\sum_{k=0}^{\abs{n}-1}{\left(-A^{4\s(n)}\right)^k} \left\langle \scalebox{.15}{\raisebox{-.3\height}{\includegraphics{B2.pdf}}} \right\rangle$$
$$\left\langle \frac{1}{[n^1]} \right\rangle =  A^{-n} \left\langle \scalebox{.15}{\raisebox{-.3\height}{\includegraphics{B3.pdf}}} \right\rangle + A^{-n+2\s(n)}\sum_{k=0}^{\abs{n}-1}{\left(-A^{4\s(n)}\right)^k} \left\langle \scalebox{.15}{\raisebox{-.3\height}{\includegraphics{VC.pdf}}} \right\rangle$$
\end{lem}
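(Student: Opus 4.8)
The plan is to prove all four formulas by induction on $\abs{n}$, building the twist region one crossing at a time and reusing the basic-tangle combination rules already computed in the proof of Lemma~\ref{L:additivity}. I would prove $\langle[n^0]\rangle$ first and then obtain the other three formulas from it. For the base case $n=\pm1$, resolving the single crossing by the skein relation~(1) gives $\langle[1^0]\rangle = A^{-1}\langle\i\rangle + A\langle\z\rangle$ and $\langle[(-1)^0]\rangle = A\langle\i\rangle + A^{-1}\langle\z\rangle$, both of which match the claimed value. For the inductive step (say $n>0$) I write $[n^0] = [(n-1)^0] + [1^0]$ and expand $\langle[n^0]\rangle$ by bilinearity over the tangle sum, collapsing each of the four terms $\langle X + Y\rangle$ with $X,Y\in\{\i,\z\}$ by the identities $\langle\i+\i\rangle=(-A^2-A^{-2})\langle\i\rangle$, $\langle\i+\z\rangle=\langle\z+\i\rangle=\langle\i\rangle$, and $\langle\z+\z\rangle=\langle\z\rangle$ established in the proof of Lemma~\ref{L:additivity}.

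Writing $\langle[n^0]\rangle = p_n\langle\i\rangle + q_n\langle\z\rangle$, this produces the linear recursion
\[
q_n = A\,q_{n-1}, \qquad p_n = -A^{-3}\,p_{n-1} + A^{-1}q_{n-1},
\]
with $p_1 = A^{-1}$ and $q_1 = A$. Then $q_n = A^n$, which is exactly the claimed coefficient of $\langle\z\rangle$, and substituting $q_{n-1}=A^{n-1}$ turns the $p_n$-recursion into a first-order recurrence with homogeneous part $(-A^{-3})^{n}$ and forcing term $A^{n-2}$; solving it yields precisely the geometric sum $p_n = A^{n-2}\sum_{k=0}^{n-1}(-A^{-4})^k$. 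The case $n<0$ is handled identically after replacing $[1^0]$ by $[(-1)^0]$ (equivalently $A\leftrightarrow A^{-1}$), and the factor $\s(n)$ in the statement simply records this symmetry.

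The formula for $\langle\frac{1}{[n^0]}\rangle$ follows the same template, now writing $\frac{1}{[n^0]} = \frac{1}{[(n-1)^0]} * \frac{1}{[1^0]}$ and using the analogous combination rules under $*$, namely $\langle\i*\i\rangle=\langle\i\rangle$, $\langle\i*\z\rangle=\langle\z*\i\rangle=\langle\z\rangle$, and $\langle\z*\z\rangle=(-A^2-A^{-2})\langle\z\rangle$ (obtained by the same kind of state-sum computation as in Lemma~\ref{L:additivity}). With $\langle\frac{1}{[n^0]}\rangle = r_n\langle\i\rangle + s_n\langle\z\rangle$ this gives $r_n = A^{-1}r_{n-1}$ and $s_n = A\,r_{n-1} - A^3 s_{n-1}$, hence $r_n = A^{-n}$ and $s_n = A^{-n+2}\sum_{k=0}^{n-1}(-A^4)^k$, matching the statement.

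Finally, the two virtual formulas require essentially no new work on the twist region: the elementary tangle $[n^1]$ (resp.\ $\frac{1}{[n^1]}$) is $[n^0]$ (resp.\ $\frac{1}{[n^0]}$) with a single virtual crossing adjoined at one end, so I would expand its bracket by substituting the already-known value of $\langle[n^0]\rangle$ and then adjoining the virtual crossing to each basic-tangle summand. Using the virtual Reidemeister moves (in particular the virtual analogues of Reidemeister~I and~II used in the proof of Theorem~\ref{T:flypes}), adjoining the virtual crossing fixes one basic tangle and converts the other into $\v$, in each case exactly as needed to turn the classical formula into the stated virtual one. I expect the main obstacle to be bookkeeping rather than depth: one must verify that the closed-form geometric sum really solves the recurrence, keep the two smoothings and sign conventions straight across $n>0$ and $n<0$ so that the $\s(n)$-dependence comes out as written, and pin down against the figures which basic tangle is sent to $\v$ when the virtual crossing is adjoined.
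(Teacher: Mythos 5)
Your handling of the first three formulas is sound, though your route differs from the paper's: the paper proves the two horizontal formulas by induction and then obtains \emph{both} vertical formulas at once by observing that $\frac{1}{[n^\e]}$ is $[-n^\e]$ rotated a quarter turn --- a rotation that exchanges $\z \leftrightarrow \i$ and fixes $\v$ --- whereas you run a second induction for $\frac{1}{[n^0]}$ and adjoin a virtual crossing to get the $\e=1$ cases. Your two recursions are correct: the composition rules you quote do give $p_n = -A^{-3}p_{n-1}+A^{-1}q_{n-1}$, $q_n = Aq_{n-1}$ and $r_n = A^{-1}r_{n-1}$, $s_n = Ar_{n-1}-A^3s_{n-1}$, the stated geometric sums solve them, and $[n^1]=[n^0]+\v$ together with $\langle \i + \v\rangle = \langle\i\rangle$, $\langle\z+\v\rangle = \langle\v\rangle$ gives the second formula.

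The gap is at the fourth formula, precisely in the ``bookkeeping'' you deferred. The virtual crossing of $\frac{1}{[n^1]}$ sits at the top or bottom of the vertical twist region, so $\frac{1}{[n^1]} = \frac{1}{[n^0]} * \v$; this is also how the paper uses the symbol (in the proof of Lemma \ref{recursion}, $T_{n-1}*\v$ converts $\frac{1}{[a_{n-1}^0]}$ into $\frac{1}{[a_{n-1}^1]}$). But composing with $\v$ \emph{vertically} acts on the basic tangles in the opposite way from composing horizontally: $\langle \i * \v\rangle = \langle \v\rangle$ while $\langle \z * \v \rangle = \langle \z \rangle$ (this is in effect recorded in the proof of Lemma \ref{L:ratio}, where placing a virtual crossing below a tangle sends the $\i$-coefficient to the $\v$-slot and fixes the $\z$-coefficient). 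Carrying out your substitution therefore yields
$$\left\langle \frac{1}{[n^1]} \right\rangle = A^{-n}\left\langle \v \right\rangle + A^{-n+2\s(n)}\sum_{k=0}^{\abs{n}-1}{\left(-A^{4\s(n)}\right)^k} \left\langle \z \right\rangle,$$
in which the $\i$ term of the classical formula has become $\v$ and the $\z$ term is unchanged --- not the stated formula, in which the $\i$ term is unchanged and the $\z$ term has become $\v$. The two genuinely disagree: at $n=0$ the stated formula evaluates to $\langle\i\rangle$ even though $\frac{1}{[0^1]}$ is a single virtual crossing with bracket $\langle\v\rangle$, and the stated formula forces $R_{\frac{1}{[n^1]}} = 1 + \frac{1}{R_{[-n^0]}}$, contradicting Corollary \ref{C:RCelem}, whereas the display above gives $R_{\frac{1}{[n^1]}} = \frac{1}{R_{[-n^0]}+1}$ as that corollary requires.

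In other words, the formula your method actually produces is the one consistent with the rest of the paper (it is also what the paper's own rotation argument yields, since rotating the $[-n^1]$ formula swaps $\i$ and $\z$ and fixes $\v$); the fourth formula as printed appears to be a misprint. The flaw in your proposal is the unverified assertion that the adjunction comes out ``exactly as needed to turn the classical formula into the stated virtual one'': for the fourth formula that assertion is false, and performing the computation --- which is the entire content of this step --- shows that what you can prove is a corrected version of the statement, not the statement as written.
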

\begin{proof}
The first two formulas are an easy induction.  The second two then follow from the observation that $\frac{1}{[n^\e]}$ is the result of rotating $[-n^\e]$ a quarter turn clockwise.
\end{proof}

Once we have computed the bracket polynomial, it is easy to find $R_T$ and $C(T)$.

\begin{cor} \label{C:RCelem}
$$R_{[n^0]}(A) = A^{-2\s(n)}\sum_{k=0}^{\abs{n}-1}{\left(-A^{-4\s(n)}\right)^k}, \qquad C([n^0]) = iR_{[n^0]}(\sqrt{i}) = n$$
$$R_{[n^1]}(A) = R_{[n^0]}(A) + 1, \qquad C([n^1]) = iR_{[n^1]}(\sqrt{i}) = n + i$$
$$R_{\frac{1}{[n^0]}}(A) = \frac{1}{R_{[-n^0]}(A)}, \qquad C\left(\frac{1}{[n^0]}\right) = \frac{1}{n}$$
$$R_{\frac{1}{[n^1]}}(A) = \frac{1}{R_{[-n^0]}(A) + 1}, \qquad C\left(\frac{1}{[n^1]}\right) = \frac{1}{n-i}$$
\end{cor}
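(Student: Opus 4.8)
The plan is to read the three bracket coefficients $f_T,g_T,h_T$ off Lemma~\ref{L:elementary} for each of the four elementary tangles, substitute them into $R_T(A)=\frac{f_T(A)+h_T(A)}{g_T(A)+h_T(A)}$, simplify, and then set $A=\sqrt{i}$ to read off $C(T)=iR_T(\sqrt{i})$.

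First I would handle the horizontal tangles by direct substitution. For $[n^0]$ we have $h_T=0$ and $g_T=A^n$, so $R_{[n^0]}=f_T/g_T$ and the factor $A^n$ cancels, leaving exactly $A^{-2\s(n)}\sum_{k=0}^{\abs{n}-1}(-A^{-4\s(n)})^k$. For $[n^1]$ the only change from $[n^0]$ is that the coefficient $A^n$ is attached to $\v$ rather than $\z$, so $g_T=0$ and $h_T=A^n$ with the same $f_T$; hence $R_{[n^1]}=\frac{f_T+A^n}{A^n}=R_{[n^0]}+1$.

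For the vertical tangles I would not re-substitute but instead use the observation, already invoked in the proof of Lemma~\ref{L:elementary}, that $\frac{1}{[n^\e]}$ is $[-n^\e]$ rotated a quarter turn. The key point is how $R_T$ transforms under this rotation: a quarter turn interchanges the two classical basic tangles $\i$ and $\z$ (so it swaps $f_T$ with $g_T$) while fixing the virtual tangle $\v$ (so it fixes $h_T$); therefore $R_T\mapsto\frac{g_T+h_T}{f_T+h_T}=1/R_T$. Applying this to $[-n^\e]$ gives $R_{\frac{1}{[n^0]}}=1/R_{[-n^0]}$ and $R_{\frac{1}{[n^1]}}=1/R_{[-n^1]}=1/(R_{[-n^0]}+1)$, as claimed.

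Finally I would specialize to $A=\sqrt{i}$, where $A^2=i$ and $A^4=-1$. Then $-A^{-4\s(n)}=1$, so every term of the geometric sum is $1$ and the sum collapses to $\abs{n}$, while $A^{-2\s(n)}=i^{-\s(n)}=-\s(n)\,i$; this yields $R_{[n^0]}(\sqrt{i})=-ni$ and hence $C([n^0])=i(-ni)=n$. The remaining conductances then follow mechanically from the $R$-formulas above, using $R_{[-n^0]}(\sqrt{i})=ni$ and $1/i=-i$: $C([n^1])=n+i$, $C(\frac{1}{[n^0]})=i/(ni)=1/n$, and $C(\frac{1}{[n^1]})=i/(ni+1)=1/(n-i)$. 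The arithmetic is routine; the one step deserving care is the transformation of $R_T$ under a quarter turn, specifically that $h_T$ is fixed while $f_T$ and $g_T$ are swapped, since mishandling the virtual coefficient would produce the incorrect value $1+1/R_{[-n^0]}$ in place of the correct $1/(R_{[-n^0]}+1)$ for $\frac{1}{[n^1]}$.
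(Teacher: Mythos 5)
Your proof is correct, and for the two horizontal tangles it follows the paper's (implicit) argument exactly: the paper offers no written proof of this corollary, treating it as immediate substitution of the coefficients of Lemma~\ref{L:elementary} into $R_T=(f_T+h_T)/(g_T+h_T)$, followed by evaluation at $A=\sqrt{i}$. For the two vertical tangles, however, your route is genuinely different, and it happens to be the more reliable one. Instead of substituting the lemma's printed third and fourth formulas, you transport the quarter-turn observation to the level of $R_T$ itself: rotation exchanges $f_T\leftrightarrow g_T$ and fixes $h_T$, hence inverts $R_T$. This difference matters. Had you substituted the fourth formula of Lemma~\ref{L:elementary} as printed (which assigns $f=A^{-n}$, $g=0$, and the geometric sum to $h$), you would have obtained $R_{\frac{1}{[n^1]}}=1+1/R_{[-n^0]}$, which is \emph{not} the Corollary's $1/(R_{[-n^0]}+1)$; the two rational functions genuinely differ. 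The inconsistency is a misprint in the lemma, not a flaw in the Corollary or in your argument: a direct expansion for $n=1$ gives
$$\left\langle \frac{1}{[1^1]} \right\rangle = A\left\langle \z \right\rangle + A^{-1}\left\langle \v \right\rangle ,$$
so the geometric sum belongs to the coefficient of $\z$ and $A^{-n}$ to the coefficient of $\v$, exactly as your rotation argument predicts; with these corrected coefficients, substitution does recover $1/(R_{[-n^0]}+1)$ and $C\left(\frac{1}{[n^1]}\right)=\frac{1}{n-i}$. In short, the ``step deserving care'' you flagged is precisely the crux: handling the virtual coefficient $h_T$ correctly under rotation is what keeps your derivation consistent with the stated Corollary, and your evaluations at $A=\sqrt{i}$ are routine and correct.
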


\medskip

There is a useful relationship between $R_T$ for a tangle and for the results of adding a virtual crossing to the right or below the tangle.

\begin{lem} \label{L:ratio}
Let $T$ be a virtual tangle, and let $T'$ and $T''$ be the virtual tangles shown below:
$$T' = \scalebox{.5}{\raisebox{-.3\height}{\includegraphics{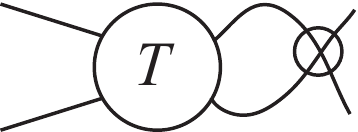}}}, \qquad T'' = \scalebox{.5}{\raisebox{-.3\height}{\includegraphics{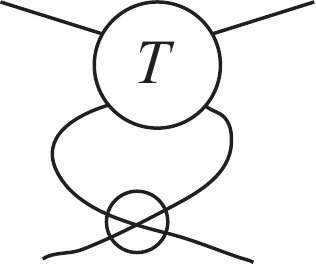}}}$$
Then $R_T = R_{T'}R_{T''}$ and $C(T) = -iC(T')C(T'')$.
\end{lem}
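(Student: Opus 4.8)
The plan is to reduce everything to the bracket coefficients $f_T,g_T,h_T$ determined by $\langle T\rangle = f_T\langle\i\rangle + g_T\langle\z\rangle + h_T\langle\v\rangle$, and to track how adjoining a single virtual crossing acts on this triple. Because the bracket never resolves a virtual crossing and a virtual crossing contributes no loop factor, forming $T'$ or $T''$ from $T$ does nothing to the state sum except permute the three basic tangles $\i,\z,\v$ appearing in it; so the whole statement comes down to identifying that permutation and then a one-line algebraic cancellation. Note also that the conclusion is symmetric in $T'$ and $T''$, so it does not matter which of the pictures is the ``right'' and which is the ``below'' composition.

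First I would work out the action on the basic tangles. Adjoining a virtual crossing on the right transposes the two right-hand endpoints, and simplifying with the virtual Reidemeister moves (adjacent virtual crossings cancel, and a virtual kink is removable) gives $\z+\v\simeq\v$, $\v+\v\simeq\z$, and $\i+\v\simeq\i$. Hence $\langle T'\rangle = f_T\langle\i\rangle + h_T\langle\z\rangle + g_T\langle\v\rangle$, so that $(f_{T'},g_{T'},h_{T'}) = (f_T,h_T,g_T)$. The mirror computation for a virtual crossing below transposes the two bottom endpoints and yields $\z*\v\simeq\z$, $\i*\v\simeq\v$, $\v*\v\simeq\i$, so $(f_{T''},g_{T''},h_{T''}) = (h_T,g_T,f_T)$.

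Substituting into $R = (f+h)/(g+h)$ then gives
$$R_{T'} = \frac{f_T+g_T}{g_T+h_T}, \qquad R_{T''} = \frac{f_T+h_T}{f_T+g_T},$$
and multiplying, the common factor $f_T+g_T$ cancels to leave $\frac{f_T+h_T}{g_T+h_T} = R_T$; this is an identity of rational functions in $A$. For the conductance, the definition $C=iR(\sqrt i)$ gives $R_{T'}(\sqrt i) = -iC(T')$ and $R_{T''}(\sqrt i) = -iC(T'')$, so $C(T) = iR_{T'}(\sqrt i)R_{T''}(\sqrt i) = i(-i)^2 C(T')C(T'') = -iC(T')C(T'')$, using $(-i)^2=-1$.

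The main thing to get right is the geometric first step: pinning down, against the pictures $T'$ and $T''$, exactly which basic tangle each of $\z+\v$, $\i+\v$, $\v+\v$ (and their vertical analogues) reduces to under the virtual Reidemeister moves, since everything downstream is forced by that bookkeeping. I would also flag the degenerate tangles where $f_T+g_T\equiv 0$ (for instance $T=\v$ itself, where $R_{T'}R_{T''}$ formally reads $0\cdot\infty$): there the cancellation must be read as an identity of fractions rather than a numerical product, exactly as the classical fraction is permitted to take the value $\infty$ on the $[\infty]$ tangle.
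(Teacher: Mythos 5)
Your proof is correct and takes essentially the same approach as the paper: identify the permutation of the three basic tangles induced by adjoining a virtual crossing on the right or below (you verify the reductions $\z+\v\simeq\v$, $\v+\v\simeq\z$, $\i*\v\simeq\v$, $\v*\v\simeq\i$ explicitly, where the paper simply invokes virtual Reidemeister moves), then cancel the common factor $f_T+g_T$ in the product $R_{T'}R_{T''}$ and evaluate at $A=\sqrt{i}$ to get the conductance identity. Your closing flag of the degenerate case $f_T+g_T\equiv 0$ (e.g.\ $T=\v$), where the product must be read as an identity of formal fractions rather than numbers, is a genuine edge case that the paper's proof passes over silently.
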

\begin{proof}
Say that $\left\langle T \right\rangle = f_T(A)\left\langle \scalebox{.15}{\raisebox{-.3\height}{\includegraphics{B3.pdf}}} \right\rangle + g_T(A)\left\langle \scalebox{.15}{\raisebox{-.3\height}{\includegraphics{B2.pdf}}} \right\rangle + h_T(A)\left\langle \scalebox{.15}{\raisebox{-.3\height}{\includegraphics{VC.pdf}}} \right\rangle$.  Then we can find the brackets for $T'$ and $T''$ just by adding a virtual crossing on the side or below each of the basic tangles, and applying virtual Reidemeister moves.
$$\left\langle T' \right\rangle = f_T(A)\left\langle \scalebox{.15}{\raisebox{-.3\height}{\includegraphics{B3.pdf}}} \right\rangle + g_T(A)\left\langle \scalebox{.15}{\raisebox{-.3\height}{\includegraphics{VC.pdf}}} \right\rangle + h_T(A)\left\langle \scalebox{.15}{\raisebox{-.3\height}{\includegraphics{B2.pdf}}} \right\rangle$$
$$\left\langle T'' \right\rangle = f_T(A)\left\langle \scalebox{.15}{\raisebox{-.3\height}{\includegraphics{VC.pdf}}} \right\rangle + g_T(A)\left\langle \scalebox{.15}{\raisebox{-.3\height}{\includegraphics{B2.pdf}}} \right\rangle + h_T(A)\left\langle \scalebox{.15}{\raisebox{-.3\height}{\includegraphics{B3.pdf}}} \right\rangle$$
This means that:
$$R_T(A) = \dfrac{f_T(A) + h_T(A)}{g_T(A) + h_T(A)}, \qquad R_{T'}(A) = \dfrac{f_T(A) + g_T(A)}{h_T(A) + g_T(A)}, \qquad R_{T''}(A) = \dfrac{h_T(A) + f_T(A)}{g_T(A) + f_T(A)}$$
So $R_T = R_{T'}R_{T''}$ and (since $i^2 = -1$) $C(T) = -iC(T')C(T'')$.
\end{proof}

We can now provide a recursive formula for computing the conductance of a virtual basic rational tangle.

\begin{lem} \label{recursion}
Consider a tangle $T = (a_1^{\e_1}, a_2^{\e_2}, \dots, a_n^{\e_n})$.  If $a_n$ represents a {\em horizontal} tangle, then
$$C(T) = a_n + \dfrac{C(a_1^{\e_1}, a_2^{\e_2}, \dots, a_{n-1}^{\e_{n-1}})}{(-i C(a_1^{\e_1}, a_2^{\e_2}, \dots, a_{n-2}^{\e_{n-2}}, a_{n-1}^{(1-\e_{n-1})}))^{\e_n}}$$
and if $a_n$ represents a {\em vertical} tangle, then
$$C(T) = \dfrac{1}{a_n + \dfrac{(-i C(a_1^{\e_1}, a_2^{\e_2}, \dots, a_{n-2}^{\e_{n-2}}, a_{n-1}^{(1-\e_{n-1})}))^{\e_n}}{C(a_1^{\e_1}, a_2^{\e_2}, \dots, a_{n-1}^{\e_{n-1}})}}$$
\end{lem}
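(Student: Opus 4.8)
The plan is to peel off the final elementary tangle, splitting into four cases according to whether $a_n$ is horizontal or vertical and whether $\e_n = 0$ or $\e_n = 1$. Write $P = (a_1^{\e_1}, \dots, a_{n-1}^{\e_{n-1}})$ and $\tilde P = (a_1^{\e_1}, \dots, a_{n-2}^{\e_{n-2}}, a_{n-1}^{(1-\e_{n-1})})$, and record the reduced bracket $\langle P \rangle = f_P\langle \i \rangle + g_P\langle \z \rangle + h_P\langle \v \rangle$, so that $C(P) = i\frac{f_P + h_P}{g_P + h_P}$ at $A = \sqrt i$. Everything will come down to computing $\langle T \rangle$ from $\langle P \rangle$ and the final elementary bracket of Lemma \ref{L:elementary}, using the horizontal ($+$) and vertical ($*$) multiplication tables for the three basic tangles recorded in the proof of Lemma \ref{L:additivity}.

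The two $\e_n = 0$ cases are immediate. There the final elementary tangle $[a_n^0]$ (or $\frac{1}{[a_n^0]}$) is classical, so its $\v$-coefficient vanishes and Lemma \ref{L:additivity} applies directly: for horizontal $a_n$ we get $C(T) = C(P) + C([a_n^0]) = a_n + C(P)$, and for vertical $a_n$ we get $\frac{1}{C(T)} = \frac{1}{C(P)} + \frac{1}{C(1/[a_n^0])} = a_n + \frac{1}{C(P)}$, using $C([a_n^0]) = a_n$ and $C(\frac{1}{[a_n^0]}) = \frac{1}{a_n}$ from Corollary \ref{C:RCelem}. Since $(-iC(\tilde P))^{\e_n} = 1$ when $\e_n = 0$, these are exactly the claimed formulas.

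For the two $\e_n = 1$ cases I would first carry out the bracket bookkeeping. Feeding the elementary bracket for $[a_n^1]$ (resp.\ $\frac{1}{[a_n^1]}$) through the $+$ (resp.\ $*$) table, evaluating at $A = \sqrt i$ (where the $-A^2 - A^{-2}$ terms vanish), and simplifying with the elementary value $a_n$, the horizontal answer should collapse to $C(T) = a_n + i\frac{f_P + g_P}{g_P + h_P}$ and the vertical answer to the corresponding reciprocal form. The key is to identify the trailing term: by Lemma \ref{L:ratio}, $i\frac{f_P + g_P}{g_P + h_P} = C(P')$ and $i\frac{f_P + h_P}{f_P + g_P} = C(P'')$, where $P'$ and $P''$ attach a virtual crossing to the right of and below $P$ respectively, and these satisfy $C(P) = -iC(P')C(P'')$. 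Rewriting $C(P') = \frac{C(P)}{-iC(P'')}$ turns the horizontal answer into $a_n + \frac{C(P)}{-iC(P'')}$, which matches the statement provided $\tilde P = P''$; the vertical case is the symmetric statement with $P'$ in place of $P''$.

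The main obstacle is exactly this geometric identification: $\tilde P = P''$ in the horizontal case and $\tilde P = P'$ in the vertical case. I must argue that toggling the virtual bit $\e_{n-1}$ of the outermost elementary tangle of $P$ represents the same $F$-equivalence class as attaching a single virtual crossing to the corresponding outer edge of $P$. When $a_n$ is horizontal the $(n-1)$st tangle is vertical, so its lone virtual crossing sits at the top or bottom; because that tangle is the outermost factor in the $*$-product building $P$, this edge is the bottom edge of all of $P$, which yields $P''$. A virtual flype slides the crossing to whichever end is convenient, and since two adjacent virtual crossings cancel, the correspondence is an involution and so holds whether $\e_{n-1}$ is $0$ or $1$. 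The vertical case is the mirror statement, with the $(n-1)$st tangle horizontal and the crossing on the right edge, giving $P'$. Once these identifications are in hand the remaining algebra is routine; alternatively, I could prove the horizontal case and deduce the vertical one from the quarter-turn duality that interchanges $+$ and $*$ and replaces $C$ by its reciprocal.
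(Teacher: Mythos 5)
Your proposal is correct, and its skeleton is the same as the paper's: the identical four-case split, Lemma \ref{L:additivity} disposing of the two $\e_n=0$ cases, and, for $\e_n=1$, exactly the paper's two key steps --- Lemma \ref{L:ratio} to trade $C(P+\v)$ for $C(P)/(-iC(P*\v))$ (symmetrically in the vertical case), followed by the flype-and-cancel argument identifying $P*\v$ (resp.\ $P+\v$) with the tangle in which $\e_{n-1}$ is toggled. The only genuine divergence is how you reach the intermediate identity $C(T)=a_n+C(P+\v)$: the paper writes $T\simeq_F (T_{n-1}+\v)+[a_n]$ and applies Lemma \ref{L:additivity} a second time (legitimate because $[a_n]$ is classical), whereas you push $\langle[a_n^{1}]\rangle$ through the state-sum tables at $A=\sqrt{i}$; both work, and your collapse to $a_n+i\frac{f_P+g_P}{g_P+h_P}$ does check out. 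One warning specific to your route: it leans on the $\e=1$ entries of Lemma \ref{L:elementary}, which the paper's own proof never uses, and the printed formula there for $\langle\frac{1}{[n^1]}\rangle$ is inconsistent with the convention (used in the basic-tangle decomposition, in Corollary \ref{C:RCelem}, and in the quarter-turn argument of that lemma's own proof) that the virtual crossing sits at the top or bottom of the vertical twist region. Under that convention the correct bracket is
$$\left\langle \frac{1}{[n^1]} \right\rangle = A^{-n+2\s(n)}\sum_{k=0}^{\abs{n}-1}\left(-A^{4\s(n)}\right)^k \left\langle \z \right\rangle + A^{-n}\left\langle \v \right\rangle,$$
with the two coefficients attached to the opposite basic tangles from what is printed; the printed version is in fact the bracket of $\frac{1}{[n^0]}+\v$ (virtual crossing to the side), which is not $F$-equivalent to $\frac{1}{[n^0]}*\v$. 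If you feed the printed formula into your vertical $\e_n=1$ computation, it will not reduce to the stated recursion (you get a spurious factor $\frac{f_P-h_P}{f_P+h_P}$ on $a_n$); with the corrected formula the computation collapses as you claim and your argument goes through. The paper's additivity route sidesteps this issue entirely, which is a small advantage of its version of the peeling step.
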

\begin{proof}
We first consider the case when $a_n^{\e_n}$ represents a horizontal tangle and $\e_n = 0$.  Let $T_{n-1} = (a_1^{\e_1}, a_2^{\e_2}, \dots, a_{n-1}^{\e_{n-1}})$.  Then $T = T_{n-1} + [a_n]$. Since $[a_n]$ is a classical tangle, Lemma \ref{L:additivity} implies $C(T) = C(T_{n-1}) + C([a_n]) = C(T_{n-1}) + a_n$.  So (since $\e_n = 0$)
$$C(T) = a_n + \frac{C(a_1^{\e_1}, a_2^{\e_2}, \dots, a_{n-1}^{\e_{n-1}})}{1} = a_n + \dfrac{C(a_1^{\e_1}, a_2^{\e_2}, \dots, a_{n-1}^{\e_{n-1}})}{(-i C(a_1^{\e_1}, a_2^{\e_2}, \dots, a_{n-2}^{\e_{n-2}}, a_{n-1}^{(1-\e_{n-1})}))^{\e_n}}.$$
On the other hand, if $\e_n = 1$, then $T = T_{n-1} + \v + [a_n]$ (up to $F$-equivalence), so $C(T) = C\left(T_{n-1} + \v\right) + C([a_n]) = C\left(T_{n-1} + \v\right) + a_n$.  By Lemma \ref{L:ratio}, $C\left(T_{n-1} + \v\right) = i\dfrac{C(T_{n-1})}{C\left(T_{n-1}*\v\right)}$.  If $a_n$ represents a horizontal tangle, then $a_{n-1}$ represents a vertical tangle, so $T_{n-1}*\v$ either adds a virtual crossing to $\frac{1}{[a_{n-1}^0]}$ or (by Reidemeister move (II$^*$)) cancels a virtual crossing from $\frac{1}{[a_{n-1}^1]}$.  So 
$$C(T) = a_n + i \dfrac{C(T_{n-1})}{C(a_1^{\e_1}, a_2^{\e_2}, \dots, a_{n-2}^{\e_{n-2}}, a_{n-1}^{(1-\e_{n-1})})} = a_n + \dfrac{C(a_1^{\e_1}, a_2^{\e_2}, \dots, a_{n-1}^{\e_{n-1}})}{(-i C(a_1^{\e_1}, a_2^{\e_2}, \dots, a_{n-2}^{\e_{n-2}}, a_{n-1}^{(1-\e_{n-1})}))^{\e_n}}.$$

\medskip

Now we consider the case when $a_n^{\e_n}$ represents a vertical tangle and $\e_n = 0$.  Then $T = T_{n-1}*\frac{1}{[a_n^0]}$, and Lemma \ref{L:additivity} implies 
$$C(T) = \dfrac{1}{\dfrac{1}{C(T_{n-1})} + \dfrac{1}{C\left(\frac{1}{[a_n^0]}\right)}} = \frac{1}{\dfrac{1}{C(T_{n-1})} + a_n} = \dfrac{1}{a_n + \dfrac{1}{C(a_1^{\e_1}, a_2^{\e_2}, \dots, a_{n-1}^{\e_{n-1}})}}$$
$$= \dfrac{1}{a_n + \dfrac{(i C(a_1^{\e_1}, a_2^{\e_2}, \dots, a_{n-2}^{\e_{n-2}}, a_{n-1}^{(1-\e_{n-1})}))^{\e_n}}{C(a_1^{\e_1}, a_2^{\e_2}, \dots, a_{n-1}^{\e_{n-1}})}}.$$
On the other hand, if $\e_n = 1$, then $T = T_{n-1}*\v*\frac{1}{[a_n]}$, so 
$$C(T) = \frac{1}{\dfrac{1}{C\left(T_{n-1}*\v\right)} + \dfrac{1}{C\left(\frac{1}{[a_n^0]}\right)}} = \frac{1}{\dfrac{1}{C\left(T_{n-1}*\v\right)} + a_n}.$$
By Lemma \ref{L:ratio}, $C\left(T_{n-1} * \v\right) = i\dfrac{C(T_{n-1})}{C\left(T_{n-1} + \v\right)}$.  If $a_n$ represents a vertical tangle, then $a_{n-1}$ represents a horizontal tangle, so $T_{n-1}+\v$ either adds a virtual crossing to $[a_{n-1}^0]$ or cancels one in $[a_{n-1}^1]$. So
$$C(T) = \frac{1}{a_n + \dfrac{C\left(T_{n-1} + \v\right)}{iC(T_{n-1})}} = \dfrac{1}{a_n + \dfrac{(-i C(a_1^{\e_1}, a_2^{\e_2}, \dots, a_{n-2}^{\e_{n-2}}, a_{n-1}^{(1-\e_{n-1})}))^{\e_n}}{C(a_1^{\e_1}, a_2^{\e_2}, \dots, a_{n-1}^{\e_{n-1}})}}.$$
\end{proof}

From Lemma \ref{recursion}, a quick inductive argument allows us to write the conductance as a (generalized) complex continued fraction, generalizing Goldman and Kauffman's result for the conductance of a classical rational tangle.

\begin{thm} \label{T:fraction}
Consider a virtual rational tangle $T = (a_1^{\e_1}, a_2^{\e_2}, \dots, a_n^{\e_n})$.  We may assume $a_1$ and $a_n$ represent horizontal tangles, if we allow $[a_n^{\e_n}] = [0^0]$ and the single exception $[a_1^{\e_1}] = [\infty^0]$, so $n$ is assumed to be odd.  Then the conductance $C(T)$ can be written as a (generalized) complex continued fraction
$$C(T) = a_n + \cfrac{b_{n}}{a_{n-1}+\cfrac{b_{n-1}}{a_{n-2} + \cfrac{b_{n-2}}{\ddots + \cfrac{b_2}{a_1 + b_1}}}}$$
where 
$$b_{k} = \left\{ \begin{matrix} 1, & {\rm if}\ \e_k = 0 \\ -iC(a_1^{\e_1}, a_2^{\e_2}, \dots, a_{k-1}^{1-\e_{k-1}}), & {\rm if}\ \e_k = 1, k\ {\rm even} \\ (-iC(a_1^{\e_1}, a_2^{\e_2}, \dots, a_{k-1}^{1-\e_{k-1}}))^{-1}, & {\rm if}\ \e_k = 1, k\ {\rm odd} \end{matrix} \right.,\   {\rm for}\  2 \leq k \leq n,\ {\rm and}$$
$$b_1 = \left\{ \begin{matrix} 0 & {\rm if}\ e_1 = 0 \\ i & {\rm if}\ e_1 = 1 \end{matrix}\right..$$
\end{thm}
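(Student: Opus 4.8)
The plan is to prove the formula by induction on the length of the tangle vector, feeding the recursion of Lemma \ref{recursion} into an inductive hypothesis whose form alternates with the parity of the index. This alternation is forced by the structure of a basic tangle: since $n$ is odd, the tangles $a_k$ with $k$ odd are horizontal and those with $k$ even are vertical, and Lemma \ref{recursion} treats these two cases by dual formulas (one for $C$, one for $1/C$). Accordingly, writing $T_k = (a_1^{\e_1}, \dots, a_k^{\e_k})$ and letting $F_k$ denote the truncated continued fraction $a_k + b_k/(a_{k-1} + b_{k-1}/(\cdots + b_2/(a_1+b_1)))$, I would prove the single statement: $C(T_k) = F_k$ when $k$ is odd, and $1/C(T_k) = F_k$ when $k$ is even. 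The theorem is then the case $k = n$.

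For the base case $k=1$, Corollary \ref{C:RCelem} gives $C([a_1^0]) = a_1$ and $C([a_1^1]) = a_1 + i$, which match $a_1 + b_1$ exactly under the stated definition of $b_1$; the single exception $[a_1^{\e_1}] = [\infty^0]$ is absorbed by the convention $C([\infty^0]) = \infty = \infty + 0$, so the odd hypothesis holds at $k=1$ in every admissible case. (One checks that this $\infty$ propagates correctly one level up, since $b_2/(a_1 + b_1) = b_2/\infty = 0$ recovers $C(T_2) = 1/a_2 = C(\frac{1}{[a_2^0]})$, consistent with $[\infty^0]*\frac{1}{[a_2]} \simeq \frac{1}{[a_2]}$.)

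For the inductive step I would abbreviate $Q_k = -i\,C(a_1^{\e_1}, \dots, a_{k-2}^{\e_{k-2}}, a_{k-1}^{1-\e_{k-1}})$, so that the exponentiated factor appearing in Lemma \ref{recursion} is precisely $Q_k^{\e_k}$. When $k$ is odd, $a_k$ is horizontal and Lemma \ref{recursion} gives $C(T_k) = a_k + C(T_{k-1})/Q_k^{\e_k}$; substituting the even hypothesis $C(T_{k-1}) = 1/F_{k-1}$ rewrites this as $a_k + Q_k^{-\e_k}/F_{k-1}$, and since $Q_k^{-\e_k}$ is $1$ when $\e_k = 0$ and $Q_k^{-1}$ when $\e_k = 1$, it equals $b_k$ in the odd case, giving $C(T_k) = F_k$. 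When $k$ is even, $a_k$ is vertical and Lemma \ref{recursion} gives $1/C(T_k) = a_k + Q_k^{\e_k}/C(T_{k-1})$; substituting the odd hypothesis $C(T_{k-1}) = F_{k-1}$ yields $a_k + Q_k^{\e_k}/F_{k-1}$, and here $Q_k^{\e_k}$ is $1$ when $\e_k = 0$ and $Q_k$ when $\e_k = 1$, which is exactly $b_k$ in the even case, giving $1/C(T_k) = F_k$.

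The step requiring the most care is the bookkeeping of the asymmetry in the definition of $b_k$: the same factor $Q_k^{\e_k}$ coming out of Lemma \ref{recursion} lands in the numerator of the continued fraction unchanged for even $k$ but with its exponent inverted for odd $k$. This is not an accident but the whole content of the matching: in the horizontal (odd) recursion the factor divides $C(T_{k-1}) = 1/F_{k-1}$, so it migrates into the numerator as $Q_k^{-\e_k}$, whereas in the vertical (even) recursion it divides $C(T_{k-1}) = F_{k-1}$ directly and stays $Q_k^{\e_k}$. Verifying that these two behaviors are precisely the two parity branches in the definition of $b_k$, and that the $\infty$-conventions pass cleanly through the innermost level $a_1 + b_1$, is the only genuine obstacle; the remainder is the routine unfolding of Lemma \ref{recursion}.
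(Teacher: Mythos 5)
Your core induction is correct, and it is exactly the argument the paper intends: the paper gives no written proof of Theorem \ref{T:fraction} beyond the remark that it follows from Lemma \ref{recursion} by a quick induction, and your parity-alternating hypothesis ($C(T_k)=F_k$ for $k$ odd, $1/C(T_k)=F_k$ for $k$ even), together with the observation that the factor $Q_k^{\e_k}$ from Lemma \ref{recursion} enters the continued fraction as $Q_k^{-\e_k}$ at odd levels because it passes through $C(T_{k-1})=1/F_{k-1}$, is precisely that argument, carried out correctly for all tangles with $a_1$ finite.

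The gap is in your treatment of the exceptional case $[a_1^{\e_1}]=[\infty^0]$, which the theorem explicitly admits. You verify your convention ``$b_2/(a_1+b_1)=b_2/\infty=0$'' only when $\e_2=0$; it fails when $\e_2=1$. Indeed, $(\infty^0,a_2^1)$ is the tangle $[\infty^0]*\frac{1}{[a_2^1]}=\frac{1}{[a_2^1]}$, whose conductance is $\frac{1}{a_2-i}$ by Corollary \ref{C:RCelem}, whereas your rule gives $1/F_2$ with $F_2=a_2+0$, i.e.\ the incorrect value $\frac{1}{a_2}$. The trouble is that for $\e_2=1$ the numerator $b_2=-iC(a_1^{1-\e_1})$ cannot be assigned a finite value consistently: the tangle behind it is $[\infty^0]$ with a virtual crossing added horizontally, which is equivalent to $[\infty^0]$ by virtual Reidemeister move I$^*$, so $b_2$ is itself infinite and $b_2/(a_1+b_1)$ is an $\infty/\infty$ indeterminate form, not $0$. (Reading $[\infty^1]$ as the single virtual crossing, so that $b_2=-i\cdot i=1$, does not help: one still gets $\frac{1}{a_2}\neq\frac{1}{a_2-i}$.) The repair is the convention the paper itself adopts in Section \ref{S:examples}: interpret the exceptional case as the limit $a_1=a\to\infty$ of finite tangles. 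Then $b_2/(a_1+b_1)=-i(a+i)/a\to -i$, so $F_2\to a_2-i$ and one recovers $\frac{1}{a_2-i}$; and since every $F_k$ and $Q_k$ is a rational function of $a$ and the recursion of Lemma \ref{recursion} is continuous in its inputs, running your induction at finite $a$ and passing to the limit (by induction on length, the conductance of each $\infty$-headed truncation equals the limit of the finite-$a$ conductances) settles the exceptional case. Without this, your proof assigns wrong values to every admissible tangle beginning $(\infty^0,a_2^1,\dots)$.
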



\section{Examples} \label{S:examples}

To illustrate the use of Theorem \ref{T:fraction}, we will compute the conductance for all virtual rational tangles $T = (a_1^{\e_1}, a_2^{\e_2}, \dots, a_n^{\e_n})$ with $n = 1, 2, 3$ and $a_1$ representing a horizontal tangle (or possibly $[a_1^{\e_1}] = [\infty^0]$).  So $a_n$ represents a vertical tangle if $n$ is even and a horizontal tangle if $n$ is odd.  If $n$ is even, we can apply Theorem \ref{T:fraction} to the tangle $T = (a_1^{\e_1}, a_2^{\e_2}, \dots, a_n^{\e_n}, 0^0)$. The conductance of a single horizontal tangle is given (as in Corollary \ref{C:RCelem}) by:
$$C(a^0) = a \qquad \qquad C(a^1) = a + i$$
For tangles with two components, there are four possibilities.  Note that if $[a^{\e_1}] = [\infty^0]$, then $(\infty^0, b^{\e_2})$ represents a single vertical tangle. (In this case, we can use the formulas below by taking the limit as $a \rightarrow \infty$.)
\begin{align*}
C(a^0, b^0) &= \dfrac{1}{b + \dfrac{1}{a}} = \dfrac{a}{ab+1} \\
C(a^1, b^0) &= \dfrac{1}{b + \cfrac{1}{a+i}} = \dfrac{a+b+a^2b}{(ab+1)^2+b^2} + \dfrac{1}{(ab+1)^2+b^2}\,i\\
C(a^0, b^1) &= \dfrac{1}{b + \cfrac{-iC(a^1)}{a}} = \dfrac{1}{b + \cfrac{-i(a+i)}{a}} = \dfrac{a(ab+1)}{(ab+1)^2 + a^2} + \dfrac{a^2}{(ab+1)^2 + a^2}\,i\\
C(a^1, b^1) &= \dfrac{1}{b + \cfrac{-iC(a^0)}{a+i}} = \dfrac{1}{b + \cfrac{-ia}{a+i}} = \dfrac{-a+b+a^2b}{(ab-1)^2+a^2+b^2-1} + \dfrac{a^2}{(ab-1)^2+a^2+b^2-1}\,i
\end{align*}
Now we turn to tangles with three components.  Since $C(a^{\e_1}, b^{\e_2}, c^0) = C(a^{\e_1}, b^{\e_2}) + c$ by Lemma \ref{L:additivity}, we we will just compute $C(a^{\e_1}, b^{\e_2}, c^1)$.
\begin{align*}
C(a^0, b^0, c^1) &= c + \cfrac{\left(\dfrac{1}{-iC(a^0, b^1)}\right)}{b + \dfrac{1}{a}} = c + \cfrac{i\left(b + \cfrac{-i(a+i)}{a}\right)}{b + \dfrac{1}{a}} = c + \dfrac{a}{ab+1} + i\\
C(a^1, b^0, c^1) &= c + \cfrac{\left(\dfrac{1}{-iC(a^1, b^1)}\right)}{b + \dfrac{1}{a+i}} = c + \cfrac{i\left(b + \cfrac{-ia}{a+i}\right)}{b + \dfrac{1}{a+i}} = c + \dfrac{a-b+a^2b}{(ab+1)^2 + b^2} + \dfrac{(1+a^2)b^2}{(ab+1)^2 + b^2}\,i \\
C(a^0, b^1, c^1) &= c + \cfrac{\left(\dfrac{1}{-iC(a^0, b^0)}\right)}{b + \cfrac{-iC(a^1)}{a}} = c + \cfrac{i\left(b + \dfrac{1}{a}\right)}{b + \cfrac{-i(a+i)}{a}} = c - \dfrac{a(ab+1)}{(ab+1)^2 + a^2} + \dfrac{(ab+1)^2}{(ab+1)^2 + a^2}\,i \\
C(a^1, b^1, c^1) &= c + \cfrac{\left(\dfrac{1}{-iC(a^1, b^0)}\right)}{b + \cfrac{-iC(a^0)}{a+i}} = c + \cfrac{i\left(b + \cfrac{1}{a+i}\right)}{b + \cfrac{-ia}{a+i}} \\
&= c - \dfrac{a-b+a^2b}{(ab-1)^2 + a^2 + b^2 - 1} - \dfrac{(1+a^2) b^2}{(ab-1)^2 + a^2 + b^2 - 1}\,i
\end{align*}

\section{Questions}

In the case of classical rational tangles, the fraction of the tangle (i.e. the conductance) is a complete invariant, so two rational tangles are isotopic if and only if they have the same fraction.  In the case of virtual rational tangles, the conductance is only an invariant of $F$-equivalence, so we cannot hope for a complete isotopy invariant.  However, we conjecture that it is a complete invariant for $F$-equivalence.

\begin{conj}
The conductance $C(T)$ completely classifies virtual rational tangles up to $F$-equivalence.
\end{conj}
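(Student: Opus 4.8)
The forward implication is already in hand: since $C(T)$ is an invariant of regular $F$-equivalence and of Reidemeister I (hence of isotopy), it is constant on $F$-equivalence classes, so $F$-equivalent tangles have equal conductance. The entire content of the conjecture is the converse — that equal conductance forces $F$-equivalence — and this is where a proof must be constructed. The natural model is the combinatorial proof of Conway's Basic Theorem given by Kauffman and Lambropoulou \cite{kl}, in which one shows that the fraction determines a canonical continued-fraction form and that two tangles sharing that form are isotopic.

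The plan is to produce a canonical form for virtual basic tangles under $F$-equivalence together with an explicit inverse to the conductance map. First I would catalog the \emph{$F$-moves} on the combinatorial vectors $(a_1^{\e_1}, \dots, a_n^{\e_n})$: the local substitutions that preserve $F$-equivalence. These include the classical continued-fraction moves (governing how different integer vectors represent the same classical tangle) together with new moves that track the virtual flags $\e_i$ — in particular the cancellation of adjacent virtual crossings and the virtual-flype relations developed in Section \ref{S:vrt}. Using these moves I would reduce every virtual basic tangle to a standard representative, for instance one in which the nonzero virtual flags occur in prescribed positions and the integer entries obey a fixed sign convention.

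Second, I would invert the recursion of Lemma \ref{recursion} and Theorem \ref{T:fraction} to recover the canonical vector from the conductance. Because $a_n$ contributes an integer to the real part of $C(T)$ when $\e_n = 0$, and a controlled imaginary shift (scaled by the subtangle factor $-iC(a_1^{\e_1},\dots,a_{n-1}^{1-\e_{n-1}})$) when $\e_n = 1$, one can attempt to ``peel off'' the pair $(a_n, \e_n)$ from the complex value and then recurse on $C(a_1^{\e_1}, \dots, a_{n-1}^{\e_{n-1}})$. If this extraction is shown to be unambiguous at each stage, the resulting algorithm is deterministic and produces a well-defined canonical vector; combined with the first step this gives injectivity of $C$ on $F$-equivalence classes, hence the conjecture.

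The hard part will be exactly this unambiguous extraction. In the classical case the last entry is read off as a nearest integer — a rounding operation on a single real number — but here the real and imaginary parts of $C(T)$ are coupled through the recursion, since the coefficients $b_k$ in Theorem \ref{T:fraction} depend on the conductances of proper subtangles. Peeling off $(a_n,\e_n)$ is therefore not a simple floor, and I would expect the crux to be a separation estimate showing that distinct canonical vectors map to distinct, suitably spaced points of $\mathbb{Q}(i)$, so that the inverse algorithm never faces a genuine choice. Equivalently, one must rule out ``virtual coincidences,'' in which two combinatorially inequivalent tangles accidentally share a conductance — the same phenomenon that makes $F$-equivalence strictly weaker than isotopy for virtual knots (recall Kauffman's nontrivial virtual knot that is $F$-equivalent to the unknot \cite{ka2}). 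Proving that no such coincidence occurs within the rational-tangle family, and that the list of $F$-moves assembled in the first step is complete, are the two points on which the whole argument turns.
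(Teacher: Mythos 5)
You should first note that the paper does not prove this statement at all: it is stated as a conjecture, the authors offer only the computations of Section \ref{S:examples} as evidence, and their closing Question explicitly asks whether a ``standard form'' for virtual rational tangles even exists. So there is no proof in the paper to match yours against, and your proposal should be judged as a research program rather than checked line-by-line.

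As a program it is sensible and mirrors the Kauffman--Lambropoulou strategy, but it is not a proof, and the two places where you yourself flag difficulty are precisely the open content of the conjecture, not technical details to be filled in later. First, you assume a complete catalog of $F$-moves on vectors $(a_1^{\e_1}, \dots, a_n^{\e_n})$ can be assembled; nothing in the paper provides this, and the paper's own example $(a^1, 0^1, c^1) \sim_F (\infty^0, (-a)^0, c^0)$ (Figure \ref{F:(a',0',c')}) shows that genuinely nonlocal $F$-equivalences between vectors occur, changing the length pattern, the signs, and all the virtual flags at once --- there is no argument anywhere that such equivalences are generated by a finite local list. Second, your ``unambiguous peeling'' step faces concrete obstructions visible already in the computed examples: the coefficients $b_k$ of Theorem \ref{T:fraction} depend on conductances of proper subtangles, so the map from vectors to $\mathbb{Q}(i)$ is not a classical continued-fraction algorithm, and distinct-looking values can coincide (e.g.\ $C(a^0,b^0,c^1) = c + \frac{a}{ab+1} + i$ has imaginary part identically $1$, exactly like $C([m^1]) = m+i$ for elementary tangles, so the extraction of $(a_n,\e_n)$ from the complex value is already ambiguous at the first step without a separation estimate you do not supply). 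Since both pillars --- completeness of the move set and injectivity on canonical forms --- are left as announced goals, the proposal leaves the conjecture exactly as open as the paper does; its value is in correctly locating where a future proof must do its work.
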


As a special case, we conjecture that the conductance at least distinguishes virtual rational tangles from classical rational tangles.

\begin{conj}
Suppose $T = (a_1^{\e_1}, \dots, a_n^{\e_n})$.  Then $T$ is equivalent to a classical rational tangle if and only if $C(T)$ is a real number.
\end{conj}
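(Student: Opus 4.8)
The plan is to treat the two implications separately; the forward direction is routine, and essentially all of the content is in the converse. For the forward direction, recall that $C$ is an invariant of $F$-equivalence and that on a classical rational tangle it coincides with the classical (rational) fraction. Hence if $T$ is $F$-equivalent to a classical rational tangle $S$, then $C(T)=C(S)$ is the fraction of $S$, which lies in $\mathbb{Q}\cup\{\infty\}$; this settles the implication ``$T$ classical $\Rightarrow C(T)$ real.''

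The first real step toward the converse is a structural lemma: for every virtual rational tangle $T$ one has $C(T)\in\mathbb{Q}(i)\cup\{\infty\}$, the Gaussian rationals. I would prove this by induction on $n$ using the recursion of Lemma \ref{recursion} (equivalently, the continued fraction of Theorem \ref{T:fraction}). The base cases $C(a^0)=a$ and $C(a^1)=a+i$ from Corollary \ref{C:RCelem} already lie in $\mathbb{Q}(i)$, and each recursive step only adds an integer $a_n$, divides by a quantity of the form $(-iC(\cdots))^{\e_n}$, or multiplies by $\pm i$ via Lemma \ref{L:ratio}; since $\mathbb{Q}(i)$ is a field closed under these operations, the claim follows, with $\infty$ as the usual exceptional value. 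The payoff is that this collapses ``$C(T)$ real'' to ``$\operatorname{Im} C(T)=0$'' and forces any real conductance to be rational, so in fact $C(T)\in\mathbb{Q}\cup\{\infty\}$.

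It then remains to show that $\operatorname{Im} C(T)=0$ implies $T$ is $F$-equivalent to a classical rational tangle, and here I see two routes. The conditional route invokes the preceding completeness conjecture: since the classical fraction realizes every value of $\mathbb{Q}\cup\{\infty\}$, there is a classical tangle $S$ with $C(S)=C(T)$, whereupon completeness of $C$ as an $F$-invariant forces $T$ and $S$ to be $F$-equivalent. The unconditional route, where the genuine difficulty lies, is to induct on the number of entries with $\e_i=1$ and show directly that $\operatorname{Im} C(T)=0$ forces each such virtual crossing to be removable by flypes and virtual Reidemeister moves; concretely, one would expand $\operatorname{Im} C(T)$ through Theorem \ref{T:fraction} as a ratio of polynomials in $a_1,\dots,a_n$ and analyze its numerator. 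The computed examples, such as $\operatorname{Im} C(a^1,b^0)=1/((ab+1)^2+b^2)$, suggest that for a tangle in normal form this numerator is a sum of squares that vanishes only at degenerate parameter values corresponding to $F$-reducible diagrams. The hard part will be exactly this control of the imaginary part: proving that the numerator coming from the nested continued fraction of Theorem \ref{T:fraction} cannot vanish on the admissible integer lattice unless the virtual crossings are inessential. Because several virtual crossings can interact, making the combinatorics of the $b_k$ terms delicate, I expect the most efficient path may actually be to establish the completeness conjecture first and then deduce the present statement from it via the conditional argument above.
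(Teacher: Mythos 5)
First, be aware that this statement is not a theorem of the paper: it appears there as a conjecture, presented explicitly as a special case of the completeness conjecture that precedes it, and the paper offers only computational evidence (the examples of Section \ref{S:examples} and the $F$-equivalence $(a^1,0^1,c^1)\sim_F(\infty^0,(-a)^0,c^0)$ of Figure \ref{F:(a',0',c')}). So there is no proof in the paper to compare yours against, and your proposal does not close the question either. What you do have is correct as far as it goes: the forward implication is routine, exactly as you say, and your structural lemma that $C(T)\in\mathbb{Q}(i)\cup\{\infty\}$ is a sound induction on Lemma \ref{recursion}; it is also genuinely needed, since without it a real but irrational value of $C(T)$ could not be matched by any classical tangle even if one grants completeness.

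The gap is the converse, and both of your routes stop short of a proof. The conditional route establishes only that the completeness conjecture implies this one --- which is precisely how the paper already frames the statement (``as a special case, we conjecture\dots''), so nothing unconditional is gained, and that conjecture is itself open. The unconditional route leaves its central claim unproved, and moreover the heuristic you extract from the examples is misleading: the vanishing locus of $\operatorname{Im} C(T)$ is \emph{not} confined to degenerate parameter values. The paper's own example has $\operatorname{Im} C(a^1,b^1,c^1)=0$ whenever $b=0$, i.e.\ along an entire family of admissible lattice points, and at those points one must exhibit the $F$-equivalence to a classical tangle by hand (the paper does this, by a picture, only for that one family). So the real task is not to show that the imaginary-part numerator never vanishes for ``essentially virtual'' tangles, but to show that \emph{every} vanishing is accompanied by an explicit chain of flypes and extended Reidemeister moves removing the virtual crossings --- and for that step no mechanism is proposed. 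Your self-assessment is accurate (you flag exactly this as the hard part), but as a proof the proposal is incomplete, and necessarily so: the statement remains open.
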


We can get some evidence for this conjecture from the examples in Section \ref{S:examples}, and looking at when the conductances are purely real.  In every case, the tangle is $F$-equivalent to a classical rational tangle.  For example, $C(a^1,b^1,c^1)$ is real when $b = 0$, but $(a^1, 0^1, c^1)$ is $F$-equivalent to $(\infty^0, (-a)^0, c^0)$, as shown in Figure \ref{F:(a',0',c')}.

\begin{figure}[htbp]
\begin{center}
\scalebox{.9}{\includegraphics{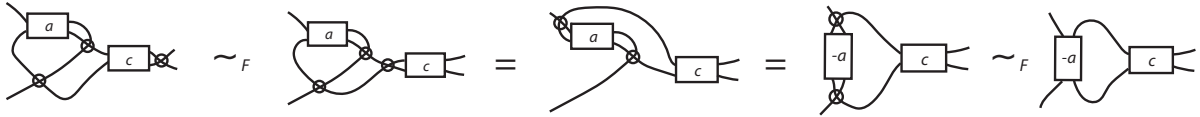}}
\end{center}
\caption{$(a^1, 0^1, c^1) \sim_F (\infty^0, (-a)^0, c^0)$}
\label{F:(a',0',c')}
\end{figure}

This also demonstrates that the same virtual rational tangle may be expressed in different ways.  The proofs that the fraction of a classical rational tangle is a complete invariant given in \cite{gk} and \cite{kl} rely on finding a unique standard form for a rational tangle, and relating these to facts about representations of rational numbers by continued fractions.  In the case of virtual rational tangles, we do not yet have such a standard form, and the fractions we have constructed are not the usual continued fractions, so these facts may no longer hold.

\begin{quest}
Is there a ``standard form" for virtual rational tangles, or for complex generalized continued fractions, that can be used to clarify the extent to which two virtual rational tangles with the same conductance are $F$-equivalent?  
\end{quest}

\end{document}